\newtheorem{thm}{Theorem}[section]
\newtheorem{cor}[thm]{Corollary}
\newtheorem{lem}[thm]{Lemma}
\newtheorem{prop}[thm]{Proposition}
\theoremstyle{definition}
\theoremstyle{remark}
\numberwithin{equation}{section}
\newcommand{\F}{\mathcal{F}}
\newcommand{\Ff}{\mathbb{F}}
\newcommand{\R}{\mathcal{R}}
\newcommand{\s}{\mathcal{S}}
\newcommand{\Gal}{\mbox{Gal}}
\newcommand{\Z}{\mathbb{Z}}
\newcommand{\Hh}{\mathcal{H}}
\newcommand{\Prob}{\mbox{Prob}}
\newcommand{\Pp}{\mathbb{P}}
\begin{document}

\title[Distribution of Points on Cyclic Curves Over Finite Fields]{Distribution of Points on Cyclic Curves Over Finite Fields}%
\author{Patrick Meisner}%


\begin{abstract}

We determine in this paper the distribution of the number of points on the cyclic covers of $\Pp^1(\Ff_q)$ with affine models $C: Y^r = F(X)$, where $F(X) \in \Ff_q[X]$ and $r^{th}$-power free when $q$ is fixed and the genus, $g$, tends to infinity. This generalize the work of Kurlberg and Rudnick and Bucur, David, Feigon and Lalin who considered different families of curves over $\Ff_q$. In all cases, the distribution is given by a sum of random variables.

\end{abstract}
\maketitle
\section{Introduction}\label{intro}

For any smooth projective, $C$, of genus $g$ over a finite field know that
$$\#C(\Pp^1(\Ff_q)) = q+1 - \sum_{j=1}^{2g} \alpha_j(C)$$
where the zeta function of $C$ is
$$Z_C(u) = \frac{\prod_{j=1}^{2g} (1-u\alpha_j(C))}{(1-q)(1-qu)},$$
and $|\alpha_j(C)| = q^{\frac{1}{2}}$ for $j=1,\dots,2g$.

The distribution of $\#C(\Pp^1(\Ff_q))$ when $C$ varies over families of curves over $\Ff_q$ is a classical object of study. For several families of curves over $\Ff_q$ Katz and Sarnak \cite{KS} showed that when the genus is fixed and $q$ tends to infinity,
$$\frac{\sum_{j=1}^{2g} \alpha_j(C)}{\sqrt{q}}$$
is distributed as the trace of a random matrix in the monodromy group of the family.

The distribution of the number points on families of curves over finite fields with $q$ fixed while the genus tends to infinity has been a topic of much research recently. It began with Kurlberg and Rudnick \cite{KR} determining the distribution of the number of points of hyperelliptic curves. Hyperelliptic curves are in one-to-one correspondence with Galois extensions of $\Ff_q(X)$ with Galois group $\Z/2\Z$. Bucur, David, Feigon and Lalin \cite{BDFL1},\cite{BDFL2} extended this result to smooth projective curves that are in one-to-one correspondence with Galois extensions of $\Ff_q(X)$ with Galois group $\Z/p\Z$, where $p$ is a prime such that $q\equiv1 \mod{p}$. Recently Lorenzo, Milione and Meleleo \cite{LMM} determined the case for Galois group $(\Z/2\Z)^n$. In this paper we determine the case for cyclic Galois groups $\Z/r\Z$ for any $q\equiv1 \mod{r}$ where $r$ is not necessarily a prime.

Let $K=\Ff_q(X)$ and let $L$ be a finite Galois extension of $K$. Let $r$ be an integer such that $q\equiv 1 \mod{r}$. Suppose that $\Gal(L/K)=\Z/r\Z$. Then there exists a unique smooth projective curve over $\Ff_q$, $C$, such that $L\cong K(C)$. Further, $C$ will have an affine model of the form
$$Y^r = \alpha F(X), \quad \quad F\in \F_{(d_1,\dots,d_{r-1})} \subset \Ff_q[X], \alpha \in \Ff^*_q$$
where
\begin{align*}
\F_{(d_1,\dots,d_{r-1})} = \{&F=f_1f_2^2\cdots f_{r-1}^{r-1}: f_i \in \Ff_q[X] \mbox{ are monic, square-free, pairwise coprime,}\\
& \mbox{and $\deg F_i = d_i$ for $1\leq i \leq r-1$}\}.
\end{align*}

The Riemann-Hurwitz formula (Theorem 7.16 of \cite{rose}) tells us that if we let $d=\sum_{i=1}^{r-1} id_i$, then the genus $g$ of the curve $C$ is given by
$$2g+2r-2 = \sum_{i=1}^{r-1} (r-(r,i))d_i + (r-(r,d))$$
where $(r,i) = \gcd(r,i)$.
Define now the following sets
\begin{align*}
\F^j_{(d_1,\dots,d_{r-1})} & = \F_{(d_1,\dots,d_j-1,\dots,d_{r-1})}\\
\hat{\F}_{(d_1,\dots,d_{r-1})} & = \{\alpha F: F \in \F_{(d_1,\dots,d_{r-1})}, \alpha \in\Ff^*_q\}\\
\hat{\F}^j_{(d_1,\dots,d_{r-1})} & = \hat{\F}_{(d_1,\dots,d_j-1,\dots,d_{r-1})}\\
\F_{[d_1,\dots,d_{r-1}]} & = \F_{(d_1,\dots,d_{r-1})}\cup \bigcup_{j=1}^{r-1} \F^j_{(d_1,\dots,d_{r-1})}\\
\hat{\F}_{[d_1,\dots,d_{r-1}]} & = \hat{\F}_{(d_1,\dots,d_{r-1})}\cup \bigcup_{j=1}^{r-1} \hat{\F}^j_{(d_1,\dots,d_{r-1})}.
\end{align*}

If we now restrict the $d_i$ such that $\sum_{i=1}^{r-1}id_i\equiv0\mod{r}$, then the genus will be invariant over curves with affine models $Y^r=F(x)$ with $F(x)\in\hat{\F}_{[d_1,\dots,d_{r-1}]}$. That is, if we let $\Hh_{g,r}$ be the set of curves of genus $g$ such that $\Gal(L/K)=\Z/r\Z$, we can write
$$\Hh_{g,r} = \bigcup_{\substack{\sum_{i=1}^{r-1}id_i\equiv 0 \mod{r} \\ \sum_{i=1}^{r-1} (r-(r,i))d_i = 2g+2r-2}} \Hh^{(d_1,\dots,d_{r-1})}$$
where $\Hh^{(d_1,\dots,d_{r-1})}$ are the curves with affine models in $\hat{\F}_{[d_1,\dots,d_{r-1}]}$. We will discuss the distribution of points not on the whole of $\Hh_{g,r}$ but on each $\Hh^{(d_1,\dots,d_{r-1})}$.

Kurlberg and Rudnick \cite{KR} first investigated the distribution of points for hyper-elliptic curves ($r=2$). Bucur, David, Feigon and Lalin \cite{BDFL1},\cite{BDFL2} then extended this result to the case where $r=p$, a prime. They noted that the number of points on such a curve will be given by the formula
$$\#C(\Pp^1(\Ff_q)) = \sum_{x\in\Pp^1(\Ff_q)} \sum_{i=0}^{p-1} \chi^i_p(F(x)) = q+1 + \sum_{x\in\Pp^1(\Ff_q)} \sum_{i=1}^{p-1} \chi^i_p(F(x)),$$
where $\chi_p$ is a primitive character on $\Ff_q$ of order $p$. Such a character exists since $q\equiv 1 \mod{p}$. This will be determined by the value $S_p(F) = \sum_{x\in\Pp^1(\Ff_q)} \chi_p(F(x))$ which leads to the result

\begin{thm}[Theorem 1.1 from \cite{BDFL1}]
 If $q$ is fixed and $d_1,\dots,d_{p-1}\to\infty$ then for any $t \in \Z[\zeta_p]$
$$\frac{|\{F \in \hat{\F}_{[d_1,\dots,d_{r-1}]} : S_p(F)=t\}|}{|\hat{\F}_{[d_1,\dots,d_{r-1}]}|} \sim \Prob\left(\sum_{i=1}^{q+1}X_i = t\right)$$
where the $X_i$ are i.i.d random variables such that
$$X_i = \begin{cases} 0 & \mbox{with probability } \frac{p-1}{q+p-1} \\ \zeta_p^j & \mbox{with probability } \frac{q}{p(q+p-1)}  \end{cases}$$
\end{thm}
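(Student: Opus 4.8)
The plan is to exploit that
$$S_p(F)=\sum_{x\in\Pp^1(\Ff_q)}\chi_p(F(x))$$
is a sum over the \emph{finite} set $\Pp^1(\Ff_q)$ of $q+1$ points, and that the target $\sum_{i=1}^{q+1}X_i$ is an i.i.d. sum with exactly one summand per point. So it suffices to show that, as $d_1,\dots,d_{p-1}\to\infty$, the random vector $\bigl(\chi_p(F(x))\bigr)_{x\in\Pp^1(\Ff_q)}$, with $F$ drawn uniformly from $\hat{\F}_{[d_1,\dots,d_{p-1}]}$, converges in distribution to a vector of independent coordinates each having the law of $X_i$. Since the limiting law is finitely supported, this is equivalent to convergence of each point mass, and pushing forward under the summation map gives the assertion. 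Because $q$ is fixed, there are only finitely many points, hence finitely many local conditions to control, which is precisely what makes the genus-to-infinity regime tractable.

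The engine for the marginal and joint local laws is the generating function for squarefree coprime tuples. Writing $F=\alpha f_1\cdots f_{p-1}^{p-1}$, the tuple $(f_1,\dots,f_{p-1})$ is counted by
$$Z(u_1,\dots,u_{p-1})=\sum_{(f_i)}\prod_i u_i^{\deg f_i}=\prod_{P}\Bigl(1+\sum_{i=1}^{p-1}u_i^{\deg P}\Bigr),$$
the Euler factor at a prime $P$ recording that either each $f_i$ is $P$-free, or exactly one $f_i$ is divisible by $P$ to the first power. Factoring out $\prod_i(1-qu_i)^{-1}$ leaves a function analytic on a polydisc of radius $>1/q$, so coefficient extraction yields $|\F_{(d_1,\dots,d_{p-1})}|\sim c\,q^{d_1+\cdots+d_{p-1}}$. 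To read off $\chi_p(F(x))$ at a fixed affine point $x$ I would detect it by orthogonality on $\mu_p$ and reduce to twisted sums $\sum_F\prod_i\chi_p^{ki}(f_i(x))$; for $k\not\equiv0\ (p)$ each $\chi_p^{ki}$ is nontrivial (as $p$ is prime), and equidistribution of $f_i(x)$ forces these to vanish in the limit, giving \emph{uniformity} of $\chi_p(F(x))$ on $\mu_p$ conditional on $F(x)\neq0$. The probability $F(x)\neq0$ comes from deleting the ``$P\mid f_i$'' terms in the Euler factor at $P=(X-x)$, i.e. replacing $1+\sum_i u_i$ by $1$ and evaluating the ratio at $u_i=1/q$:
$$\Prob(F(x)\neq0)\To\frac{1}{1+\sum_i u_i}\Big|_{u_i=1/q}=\frac{q}{q+p-1},$$
which reproduces both $\frac{p-1}{q+p-1}$ and $\frac{q}{p(q+p-1)}$.

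Independence across the affine points is then immediate from the Euler product: distinct points correspond to distinct prime factors $(X-x)$, so the joint local law factors, the mixed terms again being killed by the vanishing of nontrivial character sums, and only finitely many Euler factors are modified since $q$ is fixed. The point at infinity is handled by the very design of $\hat{\F}_{[d_1,\dots,d_{p-1}]}$: the scalar $\alpha\in\Ff^*_q$ makes the leading coefficient uniform, while the union over the sub-strata $\hat{\F}^j_{(d_1,\dots,d_{p-1})}$ supplies the cases $\deg F\not\equiv0$. On the main stratum $\infty$ is unramified and contributes $\chi_p$ of the leading coefficient (uniform on $\mu_p$), whereas on each of the $p-1$ sub-strata $\infty$ is a branch point and contributes $0$; weighting by the stratum sizes (each sub-stratum being smaller by a factor $\sim1/q$) gives $\infty$ exactly the law $\{0$ with probability $\tfrac{p-1}{q+p-1}$, and $\zeta_p^j$ with probability $\tfrac{q}{p(q+p-1)}\}$, i.e. one further i.i.d. copy $X_{q+1}$.

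The main obstacle I expect is the uniform coefficient extraction: one must show that the asymptotics of $[u_1^{d_1}\cdots u_{p-1}^{d_{p-1}}]$ applied to $Z$ and to its finitely many modified variants hold \emph{simultaneously} as all $d_i\to\infty$, with an $o(1)$ error in the ratio, despite the coprimality constraint coupling the $f_i$ inside the analytic factor $G(u_1,\dots,u_{p-1})$ peeled off above. This is a multivariate Tauberian step, and the care lies in controlling that analytic factor on a polydisc strictly larger than radius $1/q$ so that the subdominant contributions are genuinely negligible. Once it is in place, assembling the marginals with the factorization yields convergence of the full joint point mass on $(\mu_p\cup\{0\})^{q+1}$ to the product measure, and summing over $x$ gives $S_p(F)\Rightarrow\sum_{i=1}^{q+1}X_i$.
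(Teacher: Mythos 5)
Your proposal is correct and is essentially the route the paper takes: the paper obtains this statement as the special case $r=p$ (i.e.\ $n=1$) of Theorem \ref{primepowerthm}, by computing the asymptotic joint law of $\bigl(\chi_p(F(x))\bigr)_{x\in\Pp^1(\Ff_q)}$ via counts of squarefree coprime tuples with prescribed values at the $q+1$ points (Lemmas \ref{setcount1} and \ref{multfunc}, Proposition \ref{setcount}, Corollaries \ref{primepowercor1}--\ref{primepowercor3}), handling $\infty$ by exactly your stratum-weighting over $\hat{\F}_{(d_1,\dots,d_{p-1})}$ and the sub-strata $\hat{\F}^j$, and then pushing forward under the summation map. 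The only difference is one of implementation: you detect the character values by orthogonality and twisted sums together with a multivariate Tauberian coefficient extraction, whereas the paper prescribes the values $f_j(x_i)=a_{i,j}$ directly and invokes ready-made counting lemmas with uniform error $O\left(q^{-\min d_i/2}\right)$ --- lemmas whose proofs in the cited papers are precisely your Euler-product argument.
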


For general $r$, we prove in Section \ref{Numpts} a formula for the number of point on the curve with affine model $Y^r = F(X)$. In order to state this formula and our main result, we need some notation.

For $d|r$, define
$$F_{(d)}(X) = \prod_{i=1}^{d-1}\left(\prod_{j=0}^{\frac{r}{d}-1} f_{jd+i}(X)\right)^{i} = \prod_{i=1}^{r-1} f_i(X)^{i \mod{d}}.$$
Notice that we could write an affine model for our curve as $Y^r = F_{(r)}(X)$. Further, the $F_{(d)}(X)$ correspond to the subfield extension of $L$. That is if we have $K \subset L' \subset L$, then $L' = K(C_d)$, where $C_d$ is a curve with affine model $Y^d = F_{(d)}(X)$ for some $d|r$. Then Lemma \ref{numpts1} shows that
$$\#C(\Pp^1(\Ff_q))=q+1 + \sum_{d|r}\sum_{\substack{i=1\\(i,d)=1}}^{d-1}  \sum_{x\in\Pp^1(\Ff_q)} \chi_{d}^i(F_{(d)}(x))$$
where $\chi_d$ is a primitive character on $\Ff_q$ of order $d$. Again, such a character exists since $d|r$ and we are assuming that $q\equiv 1 \mod{r}$.
Hence if we define
$$S_d(F_{(d)}) =  \sum_{x\in\Pp^1(\Ff_q)} \chi_{d}(F_{(d)}(x))$$
then the number of points on the curve will be determined by the $S_d(F_{(d)})$ for all $d|r$. This leads us to the main theorem
\begin{thm}\label{mainthm}

Write $r=\prod_{j=1}^n p_j^{t_j}$. If $q$ is fixed, then as $d_1,\dots,d_{r-1}\to\infty$ for any $M_d \in \mathbb{Z}[\zeta_d]$,
$$\frac{|\{F\in\hat{\F}_{[d_1,\dots,d_{r-1}]} : S_d(F_{(d)})) = M_d, \forall d|r, d\not=1 \}|}{|\hat{\F}_{[d_1,\dots,d_{r-1}]}|} \sim \Prob\left(\sum_{i=1}^{q+1} X_{d,i} = M_d, \forall d|r, d\not=1 \right)$$
where $X_{d,i}$ are random variables taking values in $\mu_d \cup \{0\}$ such that
$$\Prob(X_{d,i}=0) = \frac{r-\frac{r}{d}}{q+r-1}$$
$$\Prob(X_{d,i} = \epsilon_{d,i}\not=0) = \frac{q+\frac{r}{d}-1} {d(q+r-1)}.$$
Moreover, if $i\not=j$ then $X_{d,i}$ and $X_{d',j}$ are independent for all $d,d'|r$. However, if we fix $i$, then for all $d|r$
$$X_{d,i} = \prod_{p|d} (X_{p^{v_p(d)},i})^{\sigma_p} \mbox{ where } 1 \leq \sigma_p \leq \frac{d}{p^{v_p(d)}} \mbox{ such that } \sigma_p \equiv (p^{v_p(d)})^{-1} \mod{\frac{d}{p^{v_p(d)}}}.$$
Further, for all $p|r$ and $1 < s \leq v_p(r)$
$$\Prob\left(X_{p^s,i}=0 | X_{p^{s-1},i}=0\right)=1$$
$$\Prob\left(X_{p^{s-1},i} = \epsilon_{p^s,i}^p | X_{p^s,i} = \epsilon_{p^s,i} \right) = 1.$$
Finally, if $d|r$ but $d\not=r$ then
$$\Prob\left(X_{p^s,i} = \epsilon_{p^s,i}\not=0,1 \leq s \leq v_p(d) \mbox{ and } X_{p^s,i} =0, v_p(d)<s\leq v_p(r) \mbox{ for all } p|r\right)$$
$$ = \begin{cases}\frac{\phi(\frac{r}{d})}{d(q+r-1)} & \mbox{if }  \epsilon_{p^{s-1},i} = \epsilon_{p^s,i}^p \mbox{ for all } p|r, 1\leq s \leq v_p(d)  \\ 0  & \mbox{otherwise} \end{cases}$$
and, if $d=r$,
$$\Prob\left(X_{p^s,i} = \epsilon_{p^s,i}, s \leq v_p(r), \mbox{ for all } p|r\right) = \begin{cases} \frac{q }{ r(q+r-1)} & \mbox{if }  \epsilon_{p^{s-1},i} = \epsilon_{p^s,i}^p, 1\leq  s \leq v_p(r), \mbox{ for all } p|r \\ 0 & \mbox{otherwise} \end{cases}.$$

\end{thm}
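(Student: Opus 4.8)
The plan is to reduce the theorem to a statement about the asymptotic independence of local statistics, in the framework of Kurlberg--Rudnick \cite{KR} and Bucur--David--Feigon--Lalin \cite{BDFL1}. Enumerate $\Pp^1(\Ff_q) = \{x_1,\dots,x_{q+1}\}$, and for each $i$ and each $F \in \hat{\F}_{[d_1,\dots,d_{r-1}]}$ record the \emph{local type} $\tau_i(F)$: the index $k\in\{1,\dots,r-1\}$ of the (unique, by pairwise coprimality) factor $f_k$ vanishing at $x_i$ if one exists, and otherwise the class of $F_{(r)}(x_i)$ in $\Ff_q^*/(\Ff_q^*)^r$. Since $\chi_d(F_{(d)}(x_i))$ depends only on $\tau_i(F)$, each sum $S_d(F_{(d)}) = \sum_{i=1}^{q+1}\chi_d(F_{(d)}(x_i))$ is a function of $(\tau_i(F))_i$, and the $i$-th summand will be the random variable $X_{d,i}$. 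Because every $S_d(F_{(d)})$ is bounded by $q+1$ and lies in the fixed lattice $\Z[\zeta_d]$, convergence in distribution of the vector $(S_d(F_{(d)}))_{d\mid r}$ is equivalent to convergence of each individual atom $\Prob(S_d(F_{(d)})=M_d\ \forall d)$. Thus it suffices to show that, as $d_1,\dots,d_{r-1}\to\infty$, the joint law of $(\tau_1(F),\dots,\tau_{q+1}(F))$ converges to a product of $q+1$ identical local laws, and to compute that law.

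The analytic core is a counting estimate: for any prescribed tuple $(t_1,\dots,t_{q+1})$ of local types, the proportion of $F \in \hat{\F}_{[d_1,\dots,d_{r-1}]}$ with $\tau_i(F)=t_i$ for all $i$ converges, as the degrees grow, to $\prod_{i=1}^{q+1}\rho(t_i)$ for a single local density $\rho$. I would prove this by counting tuples $(f_1,\dots,f_{r-1},\alpha)$ of monic squarefree pairwise coprime polynomials of the degrees prescribed by $\F_{(d_1,\dots,d_{r-1})}$ and its boundary pieces $\F^j$, with $\alpha\in\Ff_q^*$, subject to the conditions imposed by each $t_i$ at $x_i$ --- a vanishing condition $f_k(x_i)=0$, or a condition fixing the class of $\prod_k f_k(x_i)^k$ and of $\alpha$ modulo $d$-th powers. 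Each is a congruence at the place $x_i$, and the count of squarefree coprime tuples satisfying finitely many such congruences factors, up to an error negligible in the degrees, into the product of the purely local counts, by the generating-function and square-free sieve computations of \cite{BDFL1}. The point at infinity is incorporated exactly by passing from $\F_{(d_1,\dots,d_{r-1})}$ to $\hat{\F}_{[d_1,\dots,d_{r-1}]}$ and using the freedom in $\alpha$ and in the leading coefficients, so $x_i=\infty$ is treated on the same footing as the affine points. This factorization, with its uniformity in the degrees, is the main obstacle.

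Next I would compute $\rho$ explicitly. The total local mass is $q+r-1$: a given point is a simple zero of a prescribed $f_k$ with limiting density $\tfrac{1}{q+r-1}$ for each of the $r-1$ choices of $k$, and $F_{(r)}(x_i)$ lies in each of the $r$ classes of $\Ff_q^*/(\Ff_q^*)^r$ with density $\tfrac{q}{r(q+r-1)}$. Since $\chi_d(F_{(d)}(x_i))=0$ exactly when $x_i$ is a zero of some $f_k$ with $d\nmid k$, and the number of such $k$ in $\{1,\dots,r-1\}$ is $r-\tfrac{r}{d}$, this yields $\Prob(X_{d,i}=0)=\tfrac{r-r/d}{q+r-1}$, while the remaining mass $\tfrac{q+r/d-1}{q+r-1}$ spreads uniformly over $\mu_d$, giving $\Prob(X_{d,i}=\epsilon_{d,i}\neq0)=\tfrac{q+r/d-1}{d(q+r-1)}$. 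For $d\mid r$ with $d\neq r$, the joint event that $X_{p^s,i}\neq0$ for $s\leq v_p(d)$ and $X_{p^s,i}=0$ for $v_p(d)<s\leq v_p(r)$, for all $p\mid r$, corresponds exactly to $x_i$ being a zero of some $f_k$ with $\gcd(k,r)=d$; the number of such $k$ is $\phi(r/d)$, and distributing the mass $\tfrac{\phi(r/d)}{q+r-1}$ uniformly over the $d$ admissible root-of-unity tuples produces $\tfrac{\phi(r/d)}{d(q+r-1)}$, while the complementary event (a zero of no $f_k$) gives the $d=r$ density $\tfrac{q}{r(q+r-1)}$.

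Finally, the deterministic relations within a single point are character identities that I would verify directly. Choosing the primitive characters compatibly so that $\chi_{p^{s-1}}=\chi_{p^s}^{\,p}$, a computation of exponents shows
$$\frac{F_{(p^{s-1})}(x)}{F_{(p^s)}(x)} = \left(\prod_{k} f_k(x)^{-\lfloor (k \bmod p^s)/p^{s-1}\rfloor}\right)^{p^{s-1}} \in (\Ff_q^*)^{p^{s-1}}=\ker\chi_{p^{s-1}},$$
whence $\chi_{p^{s-1}}(F_{(p^{s-1})}(x)) = \chi_{p^s}(F_{(p^s)}(x))^p$, which is the relation $X_{p^{s-1},i}=X_{p^s,i}^{\,p}$; it also forces $\Prob(X_{p^s,i}=0\mid X_{p^{s-1},i}=0)=1$ since $p^{s-1}\nmid k$ implies $p^s\nmid k$. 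The same bookkeeping together with the decomposition $\mu_d\cong\prod_{p\mid d}\mu_{p^{v_p(d)}}$ yields $X_{d,i}=\prod_{p\mid d}(X_{p^{v_p(d)},i})^{\sigma_p}$ with the stated $\sigma_p$, while independence of $X_{d,i}$ and $X_{d',j}$ for $i\neq j$ is precisely the factorization across points from the counting step. Combining asymptotic independence across the $q+1$ points, the explicit local law, and these within-point relations gives the joint convergence of $(S_d(F_{(d)}))_{d\mid r}$ to the law of $\big(\sum_{i=1}^{q+1}X_{d,i}\big)_{d\mid r}$, which is the assertion of the theorem; the uniform control of the error terms in the factorization remains the crux of the argument.
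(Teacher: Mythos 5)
Your strategy is essentially the paper's: reduce the theorem to joint local conditions at the $q+1$ points of $\Pp^1(\Ff_q)$, prove asymptotic factorization of the count of squarefree pairwise-coprime tuples satisfying finitely many local conditions via the counting inputs of \cite{BDFL1} and \cite{LMM} (Lemmas \ref{setcount1} and \ref{multfunc}, assembled in the paper into Lemma \ref{setcount2} and Proposition \ref{setcount}), compute the single local density, and absorb the point at infinity by passing to $\hat{\F}_{[d_1,\dots,d_{r-1}]}$. Your organizational choices are genuine simplifications of exposition rather than a new route: conditioning on a ``local type'' replaces the paper's digit-vector relabeling $f_i \to f_{\vec{\alpha}}$ and the cumulative-to-digit reduction of Propositions \ref{primepowerprop} and \ref{generalprop}, and your derivation of the within-point relations from the identity $F_{(d)} = F_{(d')}H^{d'}$ for $d'\mid d$, with compatible characters $\chi_{d'}=\chi_d^{d/d'}$, is cleaner than the paper's bookkeeping and is correct. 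Your local masses ($\tfrac{1}{q+r-1}$ per vanishing index $k$, $\tfrac{q}{r(q+r-1)}$ per class of $F_{(r)}(x_i)$ in $\Ff_q^*/(\Ff_q^*)^r$, $\phi(r/d)$ indices $k$ with $\gcd(k,r)=d$) agree with the paper's Section \ref{heuristic} and with the final answer.

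However, one step as written is wrong, and it conceals exactly the content of Corollaries \ref{primepowercor1}--\ref{primepowercor3}, \ref{generalcor} and \ref{gencor2}. Your local type at a vanishing point records only the index $k$ with $f_k(x_i)=0$. If $d=\gcd(k,r)>1$, then for $1<d'\mid d$ the values $\chi_{d'}(F_{(d')}(x_i))$ are nonzero and are \emph{not} functions of $k$ alone, so the claim that $\chi_{d}(F_{(d)}(x_i))$ depends only on $\tau_i(F)$ fails, and $S_{d'}(F_{(d')})$ is not a function of $(\tau_i(F))_i$. You implicitly repair this later by ``distributing the mass $\phi(r/d)/(q+r-1)$ uniformly over the $d$ admissible root-of-unity tuples,'' but as set up this uniformity is an assertion, not a consequence of your counting step: the conditions you impose there (either $f_k(x_i)=0$, or a class condition on $\prod_k f_k(x_i)^k$) do not pin down the residual partial products $F_{(d')}(x_i)$ at vanishing points. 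The fix is to refine the type to include the class of $F_{(d)}(x_i)$ in $\Ff_q^*/(\Ff_q^*)^{d}$ and impose the corresponding congruences on the non-vanishing factors; uniformity then follows from the same counting lemma by the paper's strip-the-root argument (replace $f_k$ by $f_k$ divided by its root at $x_i$ and count the residual values, which are free up to the appropriate roots of unity, as in the proofs of Corollaries \ref{primepowercor1} and \ref{generalcor}). Likewise, ``infinity on the same footing'' compresses a real case analysis: $F_{(d)}(x_{q+1})$ equals the leading coefficient of $f_1$ precisely on the boundary pieces $\hat{\F}^{\vec{\beta}}_{\vec{d}(\vec{\alpha})}$ and is $0$ otherwise, and Corollary \ref{primepowercor3} verifies that both cases yield the same density. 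Both defects are repairable within your framework, but as written the reduction from the theorem to your counting estimate does not go through, and the counting estimate itself --- which you correctly flag as the crux --- is exactly what Proposition \ref{setcount} supplies.
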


To simplify notation, we will prove this for $r=p^n$ in full detail in Section \ref{primepower}, and present in Section \ref{generalr} the proof for general $r$. Section \ref{heuristic} gives a heuristic which corroborates the results of Theorem \ref{mainthm}.

\section{Number of Points on the Curve}\label{Numpts}

Let $C$ be the smooth projective curve over $\mathbb{F}_q$ associated to the field $L=\mathbb{F}_q(X)\left(\sqrt[r]{\alpha f_1(X)f_2^2(X)\dots f_{r-1}^{r-1}(X)}\right)$ where the $f_i$ are monic, squarefree and pairwise coprime and $\alpha \in \Ff^*_q$. Let us determine the smooth affine models of these curves. We first consider the affine model
$$Y^r = \alpha f_1(X)f_2^2(X)\dots f_{r-1}^{r-1}(X).$$
and investigate the smoothness.
For any $j$ such that $(j,r)=1$ we define
$$F^{(j)} = \alpha^j\prod_{i=1}^{r-1} f_i^{ij \mod{r}}.$$
Then $F^{(1)}=F$, $F^{j} = F^{(j)}H^r$ for some $H\in\Ff_q[X]$ and $Y^r = F^{(j)}(X)$ is another affine model for the curve $C$.

If $F(x)\not=0$ then any of the models will be smooth at $x$. If $f_i(x)=0$ for some $x\in\Ff_q$ and for some $i$ such that $(i,r)=d$ then we can find some $j$ such that $(j,r)=1$ and $ij \equiv d \mod {r}$. Hence $F^{(j)}$ would have a $d^{th}$ root at $x$ and the model $Y^r = F^{(j)}(X)$ would be smooth at $x$ if and only if $d=1$.

Now, without loss of generality we may assume we have an affine model $Y^r=\alpha F(X)$ such that $f_d(x)=0$ for some $x\in\Ff_q$ and $d|r$, $d\not=1$. Moreover, without loss of generality, we may assume $x=0$. Blowing-up the curve at $(0,0)$ then we get the variety defined by
$$(Y^r - \alpha f_1^1(X)f_2^2(X)\dots f_{r-1}^{r-1}(X), Xw-Yz)$$
where $w,z$ are projective coordinates. If $z\not =0$, then $Y = Xw$ and by writing $f_d(X)=Xf'_d(X)$ we get
\begin{align*}
0=(Xw)^r - X^d\alpha f_1(X)f_2^2(X)\dots f'^d_d(X)\dots f_{r-1}^{r-1}(X)\\
= X^d \left((X^{\frac{r}{d}-1} w^{\frac{r}{d}})^d - \alpha f_1(X)f_2^2(X)\dots f'^d_d(X)\dots f_{r-1}^{r-1}(X) \right).
\end{align*}

%
%
%

If we let $Y' = X^{\frac{r}{d}-1}w^{\frac{r}{d}}$, we get the affine model
$$Y'^d = \alpha f_1(X)f_2^2(X)\dots f'^d_d(X)\dots f_{r-1}^{r-1}(X)$$
which is birationally equivalent to
$$Y^d = \alpha \prod_{i=1}^{d-1}\left(\prod_{j=0}^{\frac{r}{d}-1} f_{jd+i}(X)\right)^{i} := F_{(d)}(X).$$
Now $f_d(X) \not | F_{(d)}(X)$, hence $F_{(d)}(0)\not=0$ and the affine model will be smooth at $0$. Further note that $F_{(r)}(x)=F(x)$.

%

Further, if $x_{q+1}$ is the point at infinity then
$$F_{(d)}(x_{q+1}) = \begin{cases} \alpha & F\in\F^{jd}_{(d_1,\dots,d_{r-1})}, 1\leq j \leq r/d-1 \\ 0 & \mbox{otherwise}  \end{cases}.$$

This leads to the following lemma

\begin{lem}\label{numpts1}
Let $C$ be the smooth projective curve associated to the field $L=\Ff_q(X)\left(\sqrt[r]{f_1(X)f_2^2(X)\dots f_{r-1}^{r-1}(X)}\right)$ then
$$\#C(\Pp^1(\Ff_q)) = q+1 + \sum_{d|r}\sum_{\substack{i=1\\(i,d)=1}}^{d-1}  \sum_{x\in\mathbb{P}^1(\Ff_q)} \chi_{d}^i(F_{(d)}(x)).$$
\end{lem}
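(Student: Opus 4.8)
The plan is to count the $\Ff_q$-points on the smooth projective curve $C$ by exploiting the fact that for a point $x$ where $F(x)\neq 0$, the number of solutions $Y$ to $Y^r=F(x)$ in $\Ff_q$ is detected by the character sum $\sum_{i=0}^{r-1}\chi_r^i(F(x))$, which equals $r$ when $F(x)$ is an $r$-th power and $0$ otherwise. However, the subtlety---and what distinguishes this lemma from the prime case in \cite{BDFL1}---is that the points lying \emph{above} the ramification locus (where some $f_i(x)=0$) are not correctly counted by the naive sum $\sum_{i=0}^{r-1}\chi_r^i(F(x))$, since there $F(x)=0$ and the fiber structure on the smooth model is governed by the local $d$-th power behavior exhibited in the blow-up computation above. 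The first step is therefore to stratify $\Pp^1(\Ff_q)$ according to the ramification type of each point and count the fiber size on the smooth model in each stratum.

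First I would treat the unramified points. For $x\in\Pp^1(\Ff_q)$ with $F_{(r)}(x)=F(x)\neq 0$, the smooth model near $x$ is simply $Y^r=F(x)$, so the fiber has $\sum_{i=0}^{r-1}\chi_r^i(F(x))$ points; here the $d=r$, $(i,r)=1$ together with all the $d<r$ terms need to be reconciled, but since $F(x)\neq 0$ is an $r$-th power exactly when it is a $d$-th power for every $d\mid r$ simultaneously, I expect the combined sum over all $d\mid r$ and $(i,d)=1$ to reproduce the correct fiber count. The key combinatorial identity to verify is that for a nonzero $a\in\Ff_q^*$,
\begin{equation*}
\sum_{d\mid r}\ \sum_{\substack{i=1\\(i,d)=1}}^{d-1}\chi_d^i(a) = (\#\{Y:Y^r=a\}) - 1,
\end{equation*}
which follows by grouping the orders of characters: $\sum_{d\mid r}\sum_{(i,d)=1,\,0\le i<d}\chi_d^i = \sum_{e\mid r}\chi_r^{e}$ reindexed, so that the full set of $r$-th power characters is recovered and the constant ($i=0$, $d=1$) term is precisely the $+1$ absorbed into the $q+1$.

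The main obstacle, and the heart of the proof, is the ramified points. For a point $x$ with $f_i(x)=0$ where $(i,r)=d$, the analysis preceding the lemma shows the smooth model is locally $Y^d=F_{(d)}(X)$ with $F_{(d)}(x)\neq 0$, so there is exactly one point of $C$ over such $x$ (since the curve is smooth and the cover is totally ramified of the appropriate degree there). I must check that the character-sum formula assigns exactly the contribution $1$ to each such $x$: at a ramified $x$ one has $F_{(r)}(x)=F(x)=0$, so $\chi_r^i(F(x))=0$ for the $d=r$ terms, while for the proper divisors $d'\mid r$ the value $F_{(d')}(x)$ may or may not vanish depending on whether $d'$ divides the ramification index, and I would verify that the surviving terms sum to exactly $0$, leaving the $+1$ in $q+1$ to account for the single point in the fiber. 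Finally I would handle the point at infinity $x_{q+1}$ using the explicit evaluation of $F_{(d)}(x_{q+1})$ recorded above, checking it falls into the same bookkeeping. Assembling the contributions over all strata and comparing with $q+1+\sum_{d\mid r}\sum_{(i,d)=1}\sum_x\chi_d^i(F_{(d)}(x))$ then yields the claimed formula.
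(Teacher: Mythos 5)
Your treatment of the unramified points is correct and is the same as the paper's: since $q\equiv 1 \bmod r$, every multiplicative character of order dividing $r$ is uniquely of the form $\chi_d^i$ with $d\mid r$, $1\le i<d$, $(i,d)=1$ (namely $\chi_d=\chi_r^{r/d}$), so for $a\in\Ff_q^*$ one has $\sum_{d\mid r}\sum_{(i,d)=1}\chi_d^i(a)=\#\{y\in\Ff_q:y^r=a\}-1$, with the trivial character absorbed into the $q+1$.

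The genuine gap is in your analysis of the ramified points, which is the heart of the lemma. When $f_i(x)=0$ with $(i,r)=d>1$, the fiber over $x$ is \emph{not} a single point: the cover is not totally ramified there (each of the $d$ geometric points above $x$ has ramification index $r/d$), and on the smooth model $Y^d=F_{(d)}(X)$ with $F_{(d)}(x)\neq 0$ the rational fiber consists of the solutions of $y^d=F_{(d)}(x)$, i.e.\ $d$ points if $F_{(d)}(x)$ is a $d$-th power in $\Ff_q^*$ and $0$ points otherwise. Correspondingly, the surviving character terms do not sum to $0$: the exponent of $f_i$ in $F_{(d')}$ is $i\bmod d'$, so $F_{(d')}(x)\neq 0$ exactly when $d'\mid d$ (not "when $d'$ divides the ramification index" $r/d$), and the surviving sum is
$$\sum_{d'\mid d}\ \sum_{\substack{i=1\\(i,d')=1}}^{d'-1}\chi_{d'}^i\bigl(F_{(d')}(x)\bigr)=\#\{y:y^d=F_{(d)}(x)\}-1,$$
which equals $d-1$ or $-1$; together with the $+1$ this gives exactly $d$ or $0$, matching the true fiber. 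This is precisely the paper's argument. Your plan instead assigns the contribution $1+0=1$ to every ramified point, which is correct only in the totally ramified case $(i,r)=1$; for $d>1$ both of your claims (fiber of size $1$, surviving sum equal to $0$) are false, they do not cancel, and the verification you propose would fail. Once the "exactly one point" claim is replaced by the count of $d$-th roots of $F_{(d)}(x)$ on the smooth model and the identity above, summing over $x\in\Pp^1(\Ff_q)$ (with the point at infinity handled via the recorded values of $F_{(d)}(x_{q+1})$, as you indicate) yields the lemma.
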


\begin{proof}

If $x$ is not a root of any of the $f_i$, then there will be $r$ points lying over $x$, if $F_{(r)}(x)$ is an $r^{th}$ power and no points otherwise. We can write this as
$$1+\sum_{i=1}^{r-1}\chi_r(F^i_{(r)}(x)) = 1+ \sum_{d|r}\sum_{\substack{i=1\\(i,d)=1}}^{d-1} \chi_d^i(F_{(d)}(x)).$$
If $f_i(x)=0$ for some $(i,r)=1$, then there will be one point lying over $x$. Further in this case $F_d(x)=0$ for all $d|r$ so we can write this as
$$1+ \sum_{d|r}\sum_{\substack{i=1\\(i,d)=1}}^{d-1} \chi_d^i(F_{(d)}(x)).$$
If $f_i(x)=0$ for some $(i,r)=d\not=1$, then we have to look at the smooth model $y^d=F_{(d)}(x)$. Thus there will be $d$ points lying over $x$ if $F_{(d)}(x)$ is a $d^{th}$ power and no points otherwise. We can write this as
$$1+\sum_{i=1}^{d}\chi_d(F^i_{(d)}(x)) = 1+ \sum_{d'|d}\sum_{\substack{i=1\\(i,d')=1}}^{d'-1} \chi_{d'}^i(F_{(d')}(x)).$$
Further for any $d'| r$ such that $d' \not | d$ we get that the exponent of $f_i$ in $F_{(d')}$ is non-zero. Hence $F_{(d')}(x)=0$. Therefore, regardless of the behavior at $x$, the number or points lying above $x$ is
$$1+ \sum_{d|r}\sum_{\substack{i=1\\(i,d)=1}}^{d-1} \chi_d^i(F_{(d)}(x)).$$
Summing up over all $x$, we find that
\begin{align*}
\#C(\Ff_q) &= \sum_{x\in\mathbb{P}^1(\Ff_q)}\left(1+ \sum_{d|r}\sum_{\substack{i=1\\(i,d)=1}}^{d-1} \chi_{d}^i(F_{(d)}(x))\right)\\
&=q+1 + \sum_{d|r}\sum_{\substack{i=1\\(i,d)=1}}^{d-1}  \sum_{x\in\mathbb{P}^1(\Ff_q)} \chi_{d}^i(F_{(d)}(x)).
\end{align*}
\end{proof}

\section{Set Count}\label{setcountsec}

First we need two lemmas from other papers. The first one is Lemma 6.4 from \cite{LMM} saying

\begin{lem}[Lemma 6.4 from \cite{LMM}]\label{setcount1}

Let $d_1,\dots,d_n$ be positive integers. For $0\leq\ell\leq q$ let $x_1,\dots,x_\ell$ be distinct elements of $\Ff_q$. Let $U\in\Ff_q[x]$ be such that $U(x_i)\not=0$ for $1\le i \leq \ell$. Let $a_{1,1},\dots,a_{\ell,1},\dots,a_{1,n},\dots,a_{\ell,n} \in \Ff_q^*$. Then the size of
\begin{align*}
\{(f_1,\dots,f_n) \in \F_{d_1}\times\dots\times\F_{d_n} : (f_i,f_j)=(f_i,U)=1, f_j(x_i) = a_{i,j}, 1\leq i \leq \ell, 1 \leq j \leq n\}
\end{align*}
is
\begin{align*}
R_n^U(\ell) := \frac{L_{n-1}q^{d_1+\dots+d_n}}{\zeta_q^n(2)} \left(\frac{q}{(q-1)^n(q+n)}\right)^\ell \prod_{P|U} \frac{|P|}{|P|+n}\left(1+O\left(q^{-\frac{\min{d_i}}{2}}\right)\right)
\end{align*}
where
\begin{align*}
L_n = \prod_{j=1}^n K_j
\end{align*}
and
\begin{align*}
K_j = \prod_P \left(1-\frac{j}{(|P|+1)(|P|+j)}\right).
\end{align*}
\end{lem}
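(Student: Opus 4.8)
The plan is to treat every defining condition as a multiplicative (local) condition and to extract the count as a coefficient of a multivariate generating function that factors as an Euler product over the primes of $\Ff_q[X]$. The key observation is that evaluation at a point is a ring homomorphism, so writing $P_i := X - x_i$ one has $f_j(x_i) = \prod_P P(x_i)^{v_P(f_j)}$; thus $f_j(x_i)$ is a completely multiplicative function of $f_j$, and the constraint $f_j(x_i) = a_{i,j}$ (which forces $(f_j,P_i)=1$ since $a_{i,j}\neq 0$) can be detected by orthogonality of the characters of $\Ff_q^*$,
$$\mathbf{1}[f_j(x_i) = a_{i,j}] = \frac{1}{q-1}\sum_{\chi}\chi(f_j(x_i))\overline{\chi(a_{i,j})}.$$
First I would substitute these $n\ell$ orthogonality relations into the indicator of the counted set, turning its cardinality into
$$\frac{1}{(q-1)^{n\ell}}\sum_{\{\chi_{i,j}\}}\Big(\prod_{i,j}\overline{\chi_{i,j}(a_{i,j})}\Big)\, N(\{\chi_{i,j}\}),$$
where $N(\{\chi_{i,j}\})$ counts tuples $(f_1,\dots,f_n)\in\F_{d_1}\times\cdots\times\F_{d_n}$ that are pairwise coprime, coprime to $U$ and to every $P_i$, weighted by $\prod_j\psi_j(f_j)$ with $\psi_j(P):=\prod_i\chi_{i,j}(P(x_i))$ extended multiplicatively, a Dirichlet character modulo $\prod_i P_i$.

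Next I would isolate the main term as the contribution of the principal characters (all $\chi_{i,j}$ trivial, hence every $\psi_j$ trivial). There $N$ is simply the number of pairwise coprime tuples in $\F_{d_1}\times\cdots\times\F_{d_n}$ that are coprime to $U\prod_i P_i$, which I would read off as the coefficient of $u_1^{d_1}\cdots u_n^{d_n}$ in the Euler product $\prod_P\big(1+\sum_k u_k^{\deg P}\big)$ with the factors at primes dividing $U\prod_i P_i$ deleted. Evaluating local factors at $u_k^{\deg P}\mapsto |P|^{-1}$, a generic prime contributes $1+n|P|^{-1}$, a prime $P\mid U$ is forced to $1$ (ratio $\frac{|P|}{|P|+n}$), and each $P_i$ is likewise forced to $1$ (ratio $\frac{q}{q+n}$). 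Comparing the pairwise-coprime product to the product $\prod_P(1+u_k^{\deg P})$ of $n$ independent square-free generating functions produces the correction $\prod_P\frac{(|P|+n)|P|^{n-1}}{(|P|+1)^n}$, and the telescoping identity $\frac{(x+n)x^{n-1}}{(x+1)^n}=\prod_{j=1}^{n-1}\big(1-\frac{j}{(x+1)(x+j)}\big)$ identifies this with $L_{n-1}=\prod_{j=1}^{n-1}K_j$. Since $\frac{1}{(q-1)^{n\ell}}\big(\frac{q}{q+n}\big)^\ell=\big(\frac{q}{(q-1)^n(q+n)}\big)^\ell$, collecting the prefactor assembles exactly the claimed main term $\frac{L_{n-1}q^{d_1+\cdots+d_n}}{\zeta_q^n(2)}\big(\frac{q}{(q-1)^n(q+n)}\big)^\ell\prod_{P\mid U}\frac{|P|}{|P|+n}$.

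Finally I would bound every character tuple in which some $\chi_{i,j}$ is non-trivial; this is the analytic heart and the main obstacle. For such a tuple some $\psi_j$ is a non-trivial Dirichlet character modulo $\prod_i(X-x_i)$, and the relevant single-variable sum is governed by the $L$-function $L(u,\psi_j)=\sum_f\psi_j(f)u^{\deg f}=\prod_P(1-\psi_j(P)u^{\deg P})^{-1}$, which for non-trivial $\psi_j$ is a polynomial in $u$ of degree at most $\ell-1$ whose inverse roots have absolute value at most $q^{1/2}$ by the Riemann Hypothesis for curves (Weil). Hence the coefficient of $u_j^{d_j}$ is $O(q^{d_j/2})$ rather than $O(q^{d_j})$, a saving of $q^{-d_j/2}\leq q^{-\min_i d_i/2}$ over the main term. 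The obstacle is that pairwise coprimality couples the $n$ sums, so the Euler product does not factor over $j$ directly; I would resolve this by expanding the coprimality conditions via M\"obius inclusion--exclusion over the possible common prime divisors, reducing $N(\{\chi_{i,j}\})$ to a bounded combination of products of single-variable twisted square-free sums, each controlled by $L(u,\psi_j)/L(u^2,\psi_j^2)$. Because $q$, $n$ and $\ell$ are fixed, the number of character tuples $(q-1)^{n\ell}$ is bounded, so summing the individual savings preserves the overall relative error $1+O(q^{-\min_i d_i/2})$ and completes the proof.
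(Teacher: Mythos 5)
Your proposal is essentially correct, but note first that the paper contains no proof of this statement to compare against: it is quoted verbatim as Lemma 6.4 of \cite{LMM}, and the proofs in that lineage --- like the paper's own proofs of its descendants, Lemma \ref{setcount2} and Proposition \ref{setcount} --- run along a different, elementary axis: induct on $n$, absorbing the already-chosen polynomials into the modulus $U$ and applying the multiplicative-function estimate of Lemma \ref{multfunc}, whose engine is just the M\"obius expansion of $\mu^2$ plus the interpolation count $q^{m-\ell}$ for monic polynomials with $\ell$ prescribed values, the error coming from a tail bound. Your route is genuinely different: you detect $f_j(x_i)=a_{i,j}$ by characters of $\Ff_q^*$, assemble them into Dirichlet characters $\psi_j$ modulo $\prod_i(X-x_i)$, and spend the Weil bound on the non-principal tuples. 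Your constants check out: the telescoping identity $\frac{(x+n)x^{n-1}}{(x+1)^n}=\prod_{j=1}^{n-1}\bigl(1-\frac{j}{(x+1)(x+j)}\bigr)$ is correct (each factor equals $\frac{x(x+j+1)}{(x+1)(x+j)}$), so the coupling correction is indeed $L_{n-1}$, and $\frac{1}{(q-1)^{n\ell}}\bigl(\frac{q}{q+n}\bigr)^{\ell}$ reproduces the stated $\ell$-factor. Two steps are sketched rather than executed: the substitution $u_k^{\deg P}\mapsto |P|^{-1}$ is only a heuristic until you factor the $n$-variable Euler product as $\prod_k Z(u_k)/Z(u_k^2)$ times a correction analytic in the polydisc $|u_k|<q^{-1/2}$ and perform Cauchy extraction one variable at a time at mixed radii; and your decoupling of pairwise coprimality is an infinite convergent sum rather than a ``bounded combination,'' with the extra wrinkle that when $\psi_j^2$ is principal the ratio $L(u,\psi_j)/L(u^2,\psi_j^2)$ acquires $(1-u^2)^{-\ell}$-type singularities on $|u|=1$, so you really get $O\bigl(q^{-(1/2-\epsilon)\min_i d_i}\bigr)$ rather than the clean exponent $\tfrac{1}{2}$ --- harmless here, as the source lemmas (cf.\ Lemma \ref{multfunc}) carry the same $\epsilon$ slack. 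The trade-off: your harmonic-analysis argument is one-shot and would generalize painlessly to value conditions lying in cosets, but it imports the Riemann Hypothesis for function-field $L$-functions; the LMM/BDFL induction is RH-free and keeps explicit uniformity in $\deg U$ and $\ell$, which is the form the paper's later proofs actually consume.
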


From now on, if $K_j$ or $L_n$ appears elsewhere in the paper, it will be the same formula that appears in the above Lemma \ref{setcount1}. The next lemma is Lemma 3.2 from \cite{BDFL1}

\begin{lem}[Lemma 3.2 from \cite{BDFL1}]\label{multfunc}

Fix $x_1,\dots,x_\ell\in\Ff_q$ and let $U$ be a polynomial of degree $u$ with $U(x_i)\not=0$, $1\leq i\leq\ell$. Define the multiplicative function

\begin{align*}
c_j^U(F) = \begin{cases} \mu^2(F) \prod_{P|F} \left(1+j|P|^{-1}\right)^{-1} & F(x_i)\not=0, 1\leq i\leq\ell, (F,U)=1 \\ 0 & \mbox{otherwise} \end{cases}
\end{align*}

Then for any $0<\epsilon< 1$

\begin{align*}
\sum_{\deg(F)=d} c_j^U(F) = \frac{K_jq^d}{\zeta_q(2)} \left(\frac{q+j}{q+j+1}\right)^{\ell} \left(\prod_{P|U}\left(\frac{|P|+j}{|P|+j+1}\right)\right)\left(1+O\left(q^{\epsilon(d+u+\ell)-d}\right)\right).
\end{align*}
\end{lem}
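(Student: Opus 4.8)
The plan is to study the generating power series of $c_j^U$ in the standard function-field variable. Set
$$\mathcal{C}(u) = \sum_{d\geq 0}\left(\sum_{\deg F = d} c_j^U(F)\right)u^d.$$
Since $c_j^U$ is multiplicative and supported on squarefree $F$ with $(F,U)=1$ and $F(x_i)\neq 0$, the series has an Euler product over monic primes $P$ of $\Ff_q[X]$,
$$\mathcal{C}(u) = \prod_{P}\left(1 + a_P u^{\deg P}\right), \qquad a_P = \frac{|P|}{|P|+j} \text{ if } P\nmid U \text{ and } P\neq(X-x_i)\ \forall i, \quad a_P = 0 \text{ otherwise},$$
so the excluded primes are exactly the $\ell$ linear primes $(X-x_i)$ and the prime divisors of $U$. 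I would then isolate the pole by writing $\mathcal{C}(u) = (1-qu)^{-1}\mathcal{G}(u)$, where $(1-qu)^{-1}=\prod_P(1-u^{\deg P})^{-1}$ is the zeta function of $\Ff_q[X]$ and
$$\mathcal{G}(u) = \prod_{P}(1-u^{\deg P})(1+a_P u^{\deg P}).$$
For each non-excluded $P$ the factor equals $1 - \tfrac{j}{|P|+j}u^{\deg P} - \tfrac{|P|}{|P|+j}u^{2\deg P}$; since there are $\asymp q^n/n$ primes of degree $n$, the quadratic terms make $\mathcal{G}(u)$ analytic precisely for $|u|<q^{-1/2}$, one disk larger than $\mathcal{C}$.

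The main term comes from the simple pole at $u=1/q$, where $[u^d]\mathcal{C}(u) = \mathcal{G}(1/q)\,q^d$ to leading order. Setting $u=1/q$ gives $u^{\deg P}=|P|^{-1}$, and I would compute
$$\mathcal{G}(1/q) = \prod_{\text{all }P}\left(1 - \frac{j+1}{|P|(|P|+j)}\right)\cdot\prod_{P\text{ excluded}}\frac{|P|+j}{|P|+j+1},$$
the second product restoring the factors removed at the excluded primes. The $\ell$ excluded linear primes have $|P|=q$ and contribute $\bigl(\tfrac{q+j}{q+j+1}\bigr)^{\ell}$, and the $P\mid U$ contribute $\prod_{P\mid U}\tfrac{|P|+j}{|P|+j+1}$, matching the two finite factors in the statement. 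It then remains to identify $\prod_P\bigl(1-\tfrac{j+1}{|P|(|P|+j)}\bigr)$ with $K_j/\zeta_q(2)$, which is a factor-by-factor algebraic identity: writing $x=|P|$, the left factor is $\tfrac{(x-1)(x+j+1)}{x(x+j)}$ via $x^2+jx-j-1=(x-1)(x+j+1)$, while the right factor is $\bigl(1-\tfrac{j}{(x+1)(x+j)}\bigr)(1-x^{-2}) = \tfrac{x(x+j+1)}{(x+1)(x+j)}\cdot\tfrac{(x-1)(x+1)}{x^2} = \tfrac{(x-1)(x+j+1)}{x(x+j)}$ as well. This pins the main term down exactly.

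For the error I would extract $[u^d]\mathcal{C}(u)$ by Cauchy's integral, subtract the residue at $u=1/q$, and push the remaining contour out to a circle $|u|=q^{-\beta}$ with $\tfrac12<\beta<1$, inside the disk of analyticity of $\mathcal{G}$. This produces a contribution of size $q^{\beta d}\sup_{|u|=q^{-\beta}}|\mathcal{G}(u)|$, which against the main order $q^d$ yields the decaying factor $q^{-(1-\beta)d}$; choosing $\beta$ close to the analyticity barrier converts this into the shape $q^{\epsilon d - d}$. The dependence on $u=\deg U$ and on $\ell$ is tracked through the finitely many excluded Euler factors of $\mathcal{G}$ on the contour, each bounded but collectively aggregating to the factor $q^{\epsilon(u+\ell)}$. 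The main obstacle is exactly this last point: making the error $O\!\left(q^{\epsilon(d+u+\ell)-d}\right)$ \emph{uniform} in $\ell$ and $\deg U$, which requires controlling the product over the excluded primes on the shifted contour (where the coprimality conditions to $U$ and to the $(X-x_i)$ enter) rather than merely bounding $\mathcal{G}$ by an absolute constant.
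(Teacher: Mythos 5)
Your main-term analysis is correct, and it is worth noting that this paper does not prove the lemma at all (it is imported verbatim from \cite{BDFL1}), so the comparison is against the source's argument rather than anything in the present text. Your Euler product, the factorization $\mathcal{C}(u)=\mathcal{G}(u)/(1-qu)$, the evaluation at $u=1/q$, the factor-by-factor identity $1-\tfrac{j+1}{x(x+j)}=\bigl(1-\tfrac{j}{(x+1)(x+j)}\bigr)(1-x^{-2})$ identifying the constant as $K_j/\zeta_q(2)$, and the bookkeeping at the excluded primes producing $\bigl(\tfrac{q+j}{q+j+1}\bigr)^{\ell}\prod_{P\mid U}\tfrac{|P|+j}{|P|+j+1}$ all check out.

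The genuine gap is in the error term, and it is not the uniformity issue you flagged at the end but your sentence ``analytic \emph{precisely} for $|u|<q^{-1/2}$.'' The circle $|u|=q^{-1/2}$ is only where the Euler product for $\mathcal{G}$ converges \emph{absolutely}; it is not a barrier for analytic continuation. If you genuinely cannot move the contour past $q^{-1/2}$, your method yields a relative error $q^{(\beta-1)d}$ with $\beta>\tfrac12$, i.e.\ the lemma only for $\epsilon>\tfrac12$ --- but the statement asserts it for every $0<\epsilon<1$, and the paper truly needs small $\epsilon$: in the proof of Lemma \ref{setcount2} the modulus $U$ accumulates the previously chosen polynomials, so $u=\deg U$ is comparable to the sum of all the $d_{i,j}$, and the factor $q^{\epsilon(d+u+\ell)}$ can only be tamed by taking $\epsilon$ small compared to $d/(d+u)$, which is typically well below $\tfrac12$. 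The repair is to observe that the $q^{-1/2}$ obstruction comes entirely from the squarefree indicator, which over $\Ff_q[X]$ is \emph{rational}: since $\prod_P(1+u^{\deg P})=Z(u)/Z(u^2)=\tfrac{1-qu^2}{1-qu}$, write for each non-excluded prime
\[
1+\frac{|P|}{|P|+j}\,u^{\deg P}=\bigl(1+u^{\deg P}\bigr)\left(1-\frac{j}{|P|+j}\cdot\frac{u^{\deg P}}{1+u^{\deg P}}\right),
\]
so that $\mathcal{C}(u)=\tfrac{1-qu^2}{1-qu}\,\mathcal{H}(u)\prod_{P\ \mathrm{excluded}}\bigl(1+\tfrac{|P|}{|P|+j}u^{\deg P}\bigr)^{-1}$, where the local factors of $\mathcal{H}$ are $1-O\bigl(j|P|^{-1}\bigr)u^{\deg P}$ and hence $\mathcal{H}$ converges in the whole disk $|u|<1$. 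Now the contour may be taken at $|u|=q^{-\epsilon}$ for any $\epsilon>0$, giving the full $q^{\epsilon d-d}$ decay; moreover on that contour each excluded factor is $\bigl(1-q^{-\epsilon\deg P}\bigr)^{-1}=O_{\epsilon,q}(1)$, and since $\ell\le q$ while $U$ has $O(u/\log_q u)$ distinct prime factors, the excluded product is $q^{o(u+\ell)}$ --- which simultaneously resolves the uniformity concern you raised and produces exactly the stated $O\bigl(q^{\epsilon(d+u+\ell)-d}\bigr)$. (Sanity check that the barrier had to be removable: for $j=0$ and no excluded primes the sum is the exact squarefree count $q^d(1-q^{-1})$ for $d\ge 2$, with generating function $\tfrac{1-qu^2}{1-qu}$, entire away from the pole.)
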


We use these two lemmas to prove the following lemma.

\begin{lem}\label{setcount2}

Let $d_{1,1},\dots,d_{r_1-1,1},\dots,d_{1,n},\dots,d_{r_n-1,n}$ be positive integers. For $0\leq\ell\leq q$ let $x_1,\dots,x_\ell$ be distinct elements of $\Ff_q$. Let $U\in\Ff_q[x]$ be such that $U(x_i)\not=0$ for $1\le i \leq \ell$. Let $a_{1,1},\dots,a_{\ell,1},\dots,a_{1,n},\dots,a_{\ell,n} \in \Ff_q^*$. Then the size of
\begin{align*}
\{(F_1,\dots,F_n) \in \F_{d_{1,1},\dots,d_{r_1-1,1}}\times\dots\times\F_{d_{1,n},\dots,d_{r_n-1,n}} : & (F_j,F_k)=(F_j,U)=1, F_j(x_i) = a_{i,j},\\ & 1\leq i \leq \ell, 1 \leq j,k \leq n, j\not=k\}
\end{align*}
is
\begin{align*}
T_n^U(\ell) :=& \frac{L_{r_1+\dots+r_n-n-1}q^{d_{1,1}+\dots+d_{r_n-1,n}}}{\zeta_q(2)^{r_1+\dots+r_n-n}} \left(\frac{q}{(q-1)^n(q+r_1+\dots+r_n-n)}\right)^\ell\\
& \times  \prod_{P|U} \frac{|P|}{|P|+r_1+\dots+r_n-n}\left(1+O\left(q^{-\frac{\min{d_{i,j}}}{2}}\right)\right).
\end{align*}
\end{lem}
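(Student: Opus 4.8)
The goal is to count tuples $(F_1,\dots,F_n)$ where each $F_j \in \F_{d_{1,j},\dots,d_{r_j-1,j}}$ factors as $f_{1,j}f_{2,j}^2\cdots f_{r_j-1,j}^{r_j-1}$, subject to coprimality among all the $F_j$'s (hence among all the $r_j-1$ factors across all $j$), coprimality with $U$, and prescribed nonzero values at the points $x_i$. The plan is to reduce this to Lemma \ref{setcount1} by observing that a tuple $(F_1,\dots,F_n)$ is the same data as a single tuple of $(r_1-1)+\dots+(r_n-1) = r_1+\dots+r_n-n$ monic squarefree polynomials $(f_{i,j})$ that are pairwise coprime, coprime to $U$, and take prescribed nonzero values at each $x_k$.

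First I would set $N = r_1+\dots+r_n-n$ and relabel the double-indexed family $\{f_{i,j} : 1\le j \le n,\ 1\le i \le r_j-1\}$ as a single list of $N$ squarefree polynomials with prescribed degrees $d_{i,j}$. The constraint $F_j(x_k)=a_{k,j}$ does not directly prescribe each factor's value, so the main combinatorial step is to sum over all ways of distributing the target value $a_{k,j} \in \Ff_q^*$ among the factors $f_{1,j}(x_k),\dots,f_{r_j-1,j}(x_k)$. Since $F_j(x_k) = \prod_i f_{i,j}(x_k)^i$, I would fix, for each $j$ and each point $x_k$, an assignment of nonzero values $b_{k,i,j} = f_{i,j}(x_k) \in \Ff_q^*$ satisfying $\prod_i b_{k,i,j}^{\,i} = a_{k,j}$, apply Lemma \ref{setcount1} with $N$ polynomials and these prescribed values, and then sum over all valid assignments. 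For each fixed $j$ and $k$, the number of tuples $(b_{k,1,j},\dots,b_{k,r_j-1,j})\in(\Ff_q^*)^{r_j-1}$ with $\prod_i b_{k,i,j}^i = a_{k,j}$ is $(q-1)^{r_j-2}$, independent of $a_{k,j}$ since at least one exponent (namely $i=1$) is coprime to $q-1$; multiplying over $j$ gives $(q-1)^{N-n}$ assignments per point, hence $(q-1)^{\ell(N-n)}$ in total.

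The value $R_N^U(\ell)$ from Lemma \ref{setcount1} does not depend on the particular prescribed values $a_{i,j}$ (only on their being nonzero), so each of the $(q-1)^{\ell(N-n)}$ assignments contributes the same count $R_N^U(\ell)$. Thus $T_n^U(\ell) = (q-1)^{\ell(N-n)} R_N^U(\ell)$, and I would verify that substituting the explicit formula for $R_N^U(\ell)$ with $N = r_1+\dots+r_n-n$ yields precisely the claimed expression: the factor $(q-1)^{\ell(N-n)}$ combines with the $\left(q/((q-1)^N(q+N))\right)^\ell$ from $R_N^U$ to give $\left(q/((q-1)^n(q+N))\right)^\ell$, the prefactors $L_{N-1}q^{\sum d_{i,j}}/\zeta_q^N(2)$ and $\prod_{P|U}|P|/(|P|+N)$ carry over directly, and the error term $1+O(q^{-\min d_{i,j}/2})$ is preserved since the number of terms summed is a constant (depending only on $q$, $\ell$, and the $r_j$) and all terms share the same error bound.

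The step I expect to be the main obstacle is justifying that the count of value-assignments is independent of the targets $a_{k,j}$ and equals exactly $(q-1)^{N-n}$ per point: this requires the observation that the map $(\Ff_q^*)^{r_j-1}\to\Ff_q^*$ sending $(b_i)\mapsto\prod_i b_i^{\,i}$ is surjective with all fibers of equal size, which holds because $\Ff_q^*$ is cyclic of order $q-1$ and the exponent $i=1$ appears, making the induced map on the cyclic group surjective. Care is also needed to confirm that the O-term aggregates correctly rather than accumulating a factor depending on the (bounded but nontrivial) number of summands; since $\ell \le q$ and the $r_j$ are fixed, the total number of assignments is bounded by a constant, so the error term is unaffected.
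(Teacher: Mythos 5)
Your proof is correct, but it takes a genuinely different route from the paper. The paper keeps the auxiliary modulus moving: it fixes the higher-power factors $f_{i,j}$, $j\neq 1$, first (their nonvanishing at the $x_m$ determines the required value of each squarefree part $f_{1,j}(x_i)$ exactly, with no summation over value assignments), applies Lemma \ref{setcount1} with only $n$ polynomials but with $U$ replaced by $U\prod_{j\neq 1}f_{i,j}$, and then evaluates the resulting outer sum of $\prod_{P\mid U\prod f_{i,j}}|P|/(|P|+n)$ one polynomial at a time via the multiplicative-function Lemma \ref{multfunc}. You instead flatten the whole problem at once: identify $(F_1,\dots,F_n)$ bijectively with the $N=r_1+\dots+r_n-n$ squarefree pairwise coprime factors, apply Lemma \ref{setcount1} a single time with $N$ polynomials and the \emph{fixed} modulus $U$, and absorb the condition $F_j(x_k)=a_{k,j}$ by summing over value assignments, using that $(b_i)\mapsto\prod_i b_i^{\,i}$ is a surjective homomorphism on the cyclic group $\Ff_q^*$ (the exponent $i=1$ occurs) with fibers of size $(q-1)^{r_j-2}$, giving $(q-1)^{\ell(N-n)}$ equal contributions $R_N^U(\ell)$; the exponent arithmetic $(q-1)^{\ell(N-n)}\bigl(q/((q-1)^N(q+N))\bigr)^\ell=\bigl(q/((q-1)^n(q+N))\bigr)^\ell$ indeed reproduces $T_n^U(\ell)$, and since all summands share an identical main term the error term carries through unconditionally. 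Your route is the more elementary and arguably cleaner one: it avoids Lemma \ref{multfunc} entirely and sidesteps the uniformity-in-$U$ question implicit in the paper's application of Lemma \ref{setcount1} with an auxiliary polynomial whose degree grows with the $d_{i,j}$. What the paper's heavier machinery buys is reuse: the same fix-some-factors-and-sum-with-$c_j^U$ template is exactly what drives Proposition \ref{setcount}, where the $F_j$ are no longer coprime, so the paper's proof of this lemma doubles as a rehearsal for that argument (though your flattening idea, applied to the full index set $\R$, would in fact streamline that proposition as well).
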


\begin{proof}
If we write $F_i = \prod_{j=1}^{r_i-1} f_{i,j}^j$ then,
\begin{align*}
T^U_n(\ell) &= \sum_{\substack{f_{i,j} \in \F_{d_{i,j}}, j\not=1 \\(f_{i,j},f_{k,h})=1 \\ f_{i,j}(x_m)\not=0}} R_n^{U\prod_{j\not=1}f_{i,j}}(\ell)\\
& = \sum_{\substack{f_{i,j} \in \F_{d_{i,j}}, j\not=1 \\(f_{i,j},f_{k,h})=1 \\ f_{i,j}(x_m)\not=0}} \frac{L_{n-1}q^{d_{1,1}+\dots+d_{1,n}}}{\zeta_q^n(2)} \left(\frac{q}{(q-1)^n(q+n)}\right)^\ell \prod_{P|U\prod_{j\not=1} f_{i,j}} \frac{|P|}{|P|+n}\left(1+O\left(q^{-\frac{\min{d_{1,j}}}{2}}\right)\right) \\
& = \frac{L_{n-1}q^{d_{1,1}+\dots+d_{1,n}}}{\zeta_q^n(2)} \left(\frac{q}{(q-1)^n(q+n)}\right)^\ell \left(1+O\left(q^{-\frac{\min{d_{1,j}}}{2}}\right)\right) \sum_{\substack{f_{i,j} \in \F_{d_{i,j}}, j\not=1 \\(f_{i,j},f_{k,h})=1 \\ f_{i,j}(x_m)\not=0}} \prod_{P|U\prod_{j\not=1} f_{i,j}} \frac{|P|}{|P|+n}.
\end{align*}

Lemma \ref{multfunc} shows that

\begin{align*}
\sum_{\substack{f_{i,j} \in \F_{d_{i,j}}, j\not=1 \\(f_{i,j},f_{k,h})=1 \\ f_{i,j}(x_m)\not=0}} \prod_{P|U\prod_{j\not=1} f_{i,j}} \frac{|P|}{|P|+n} = \sum_{\deg(f_{2,1}) = d_{2,1}}c^U_{n}(f_{2,1})\sum_{\deg(f_{2,2}) = d_{2,2}}c_n^{Uf_{2,1}}(f_{2,2})\dots \\
\sum_{\deg(f_{r_n-1,n}) = d_{r_n-1,n}}c_n^{U\prod_{i\not=1}f_{i,j}}(f_{r_n-1,n})
\end{align*}
\begin{align*}
=& \frac{L_{r_1+\dots+r_n-n-1}q^{d_{1,1}+\dots+d_{r_n-1,n}}}{\zeta_q(2)^{r_1+\dots+r_n-n}} \left(\frac{q}{(q-1)^n(q+r_1+\dots+r_n-n)}\right)^\ell\\
& \times  \prod_{P|U} \frac{|P|}{|P|+r_1+\dots+r_n-n}\left(1+O\left(q^{-\epsilon\min{d_{i,j}}}\right)\right).
\end{align*}

Thus, for an appropriate choice of $\epsilon$, we get the result.

\end{proof}

Lemma \ref{setcount2} deals with the case where the $F_i$ are all coprime. We want, however the case where they are not necessarily coprime. Suppose now that we have $(F_1,\dots,F_n) \in\F_{d_{1,1},\dots,d_{r_1-1,1}}\times\dots\times\F_{d_{1,n},\dots,d_{r_n-1,n}}$ which are not necessarily coprime. Let
$$F_j = \prod_{k=1}^{r_j-1} f_{j,k}^k.$$
and we want to rewrite the $F_j$ as products of square-free polynomials that are all coprime to one another. For example, if $n=2,r_1=r_2=4$ then we would have
$$F_1= f_{1,1}f_{1,2}^2f_{1,3}^3 \quad \quad \quad F_2 = f_{2,1}f_{2,2}^2f_{2,3}^3.$$
Then if we define
$$f_{(i,j)} = \gcd(f_{1,i},f_{2,j}), 1 \leq i,j\leq 3$$
and
$$f_{(i,0)} = \frac{f_{1,i}}{f_{(i,1)}f_{(i,2)}f_{(i,3)}} \quad f_{(0,j)} = \frac{f_{2,j}}{f_{(1,j)}f_{(2,j)}f_{(3,j)}}, 1\leq i,j \leq 3.$$


Then we can all the $f_{(i,j)}$ are square-free and coprime to one another. Moreover
$$F_1 = f_{(1,0)}f_{(1,1)}f_{(1,2)}f_{(1,3)}f_{(2,0)}^2f_{(2,1)}^2f_{(2,2)}^2f_{(2,3)}^2f_{(3,0)}^3f_{(3,1)}^3f_{(3,2)}^3 f_{(3,3)}^3 = \underset{(i,j)\not=(0,0)}{\prod_{i=0}^3 \prod_{j=0}^3} f_{(i,j)}^i$$
$$F_2 = f_{(0,1)}f_{(1,1)}f_{(2,1)}f_{(3,1)}f_{(0,2)}^2f_{(1,2)}^2f_{(2,2)}^2f_{(3,2)}^2f_{(0,3)}^3f_{(1,3)}^3f_{(2,3)}^3 f_{(3,3)}^3 =  \underset{(i,j)\not=(0,0)}{\prod_{i=0}^3 \prod_{j=0}^3} f_{(i,j)}^j.$$

In general, define
\begin{align*}
\R = [0,\dots,r_1-1]\times\dots\times[0,\dots,r_n-1]\setminus\{(0,0,\dots,0)\}
\end{align*}
and write $\vec{\alpha}\in\R$ as $\vec{\alpha}=(\alpha_1,\dots,\alpha_n)$. Define also $f_{\vec{\alpha}}$ to be the largest polynomial such that
$$f_{\vec{\alpha}} \mbox{ divides } \gcd_{\substack{j=1,\dots,n \\ \alpha_j \not=0}}(f_{j,\alpha_j}) \quad  \mbox{ and } \quad (f_{\vec{\alpha}}, \prod_{\substack{j=1 \\ \alpha_j=0}}^n F_j)=1.$$

With this definition we get if $\vec{\alpha}\not=\vec{\beta}$ then $(f_{\vec{\alpha}},f_{\vec{\beta}})=1$. Indeed, suppose we have $\alpha_j\not=\beta_j$ and $\alpha_j,\beta_j\not=0$. Then $f_{\vec{\alpha}}|f_{j,\alpha_j}$ and $f_{\vec{\beta}}|f_{j,\beta_j}$ and since $(f_{j,\alpha_j},f_{j,\beta_j})=1$, we get that $(f_{\vec{\alpha}},f_{\vec{\beta}})=1$. On the other hand suppose we have $\alpha_j\not=\beta_j=0$. Then $f_{\vec{\alpha}}|f_{j,\alpha_j}|F_j$ but $(f_{\vec{\beta}},F_j)=1$ hence $(f_{\vec{\alpha}},f_{\vec{\beta}})=1$.

Now, define $\vec{\beta} = (0,\dots,0,k,0,\dots,0)$, where the $k$ is in the $j^{th}$ position. Then $f_{\vec{\beta}}$ is the largest polynomial that divides $f_{j,k}$ that is coprime to all the other $f_{j',k'}$. If $f_{\vec{\beta}}\not = f_{j,k}$ then
$$\prod_{\substack{\vec{\alpha} \not = \vec{\beta} \\ \alpha_j =k}} f_{\vec{\alpha}}$$
is the largest polynomial that divides $f_{j,k}$ that is not coprime to at least one of the other $f_{j',k'}$. If $f_{\vec{\beta}} = f_{j,k}$, then $f_{\vec{\alpha}} =1$ for all $\vec{\alpha}\not=\vec{\beta}$ such that $\alpha_j=k$. In either case we get
$$f_{j,k} = f_{\vec{\beta}} \prod_{\substack{\vec{\alpha}\not=\vec{\beta} \\ \alpha_j=k}}f_{\vec{\alpha}} = \prod_{\alpha_j=k}f_{\vec{\alpha}}.$$
We then rewrite
$$F_j = \prod_{k=1}^{r_j-1}f_{j,k}^k \quad \quad \mbox{ as } \quad \quad F_j = \prod_{\vec{\alpha}\in\R} f_{\vec{\alpha}}^{\alpha_j}.$$

Define $\vec{d}(\vec{\alpha}) := (d(\vec{\alpha}))_{\vec{\alpha}\in\R}$ to be an integer vector with non-negative entries indexed by the vectors of $\R$. Further, define the set
$$\F_{\vec{d}(\vec{\alpha})} = \{(f_{\vec{\alpha}})_{\vec{\alpha}\in\R}\in\prod_{\vec{\alpha}\in\R}\F_{d(\vec{\alpha})} : (f_{\vec{\alpha}},f_{\vec{\beta}})=1 \mbox{ for all } \vec{\alpha}\not=\vec{\beta}\in\R\}.$$

To ease notation, we will write just $(f_{\vec{\alpha}})$ instead of $(f_{\vec{\alpha}})_{\alpha\in\R}$ if it is clear what set the indices $\vec{\alpha}$ run over.
Hence,
\begin{align*}
&\{(F_1,\dots,F_n) \in \F_{d_{1,1},\dots,d_{r_1-1,1}}\times\dots\times\F_{d_{1,n},\dots,d_{r_n-1,n}} : F_j(x_i) = a_{i,j}, 1\leq i \leq \ell, 1 \leq j \leq n\}\\
&= \bigcup_{\substack{\vec{d}(\vec{\alpha}) \\ \sum_{\alpha_j=k} d(\vec{\alpha}) = d_{j,k} }}\{(f_{\vec{\alpha}}) \in \F_{\vec{d}(\vec{\alpha})} :  \prod_{\vec{\alpha}\in\R}f_{\vec{\alpha}}^{\alpha_j}(x_i) = a_{i,j}, 1\leq i \leq \ell, 1 \leq j \leq n\}
\end{align*}


This leads to Proposition \ref{setcount}

\begin{prop}\label{setcount}

Let $\vec{d}(\vec{\alpha})$ be as above. For $0\leq\ell\leq q$ let $x_1,\dots,x_\ell$ be distinct elements of $\Ff_q$. Let $a_{1,1},\dots,a_{\ell,1},\dots,a_{1,n},\dots,a_{\ell,n} \in \Ff_q^*$. Then the size of
\begin{align*}
\{(f_{\vec{\alpha}}) \in \F_{\vec{d}(\vec{\alpha})} : F_j(x_i) := \prod_{\vec{\alpha}\in\R} f_{\vec{\alpha}}^{\alpha_j}(x_i) = a_{i,j}, 1\leq i \leq \ell, 1 \leq j \leq n\}
\end{align*}
is
\begin{align*}
S_n(\ell) := \frac{L_{r_1\cdots r_n-2} q^{\sum_{\vec{\alpha}\in\R}d_{\vec{\alpha}}}}{\zeta_q(2)^{r_1\cdots r_n-1}} \left(\frac{q}{(q-1)^n(q+r_1\cdots r_n-1)}\right)^\ell\left(1 + O\left(q^{-\frac{\min_{\vec{\alpha}\in\R} d_{\vec{\alpha}}}{2}}\right)\right).
\end{align*}
\end{prop}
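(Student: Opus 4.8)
The plan is to reduce the count to Lemma \ref{setcount1} applied to the \emph{entire} family of $N := r_1\cdots r_n - 1$ squarefree, pairwise coprime polynomials $(f_{\vec\alpha})_{\vec\alpha\in\R}$, and then to correct for the fact that we prescribe only the products $F_j(x_i)=\prod_{\vec\alpha}f_{\vec\alpha}^{\alpha_j}(x_i)$ rather than the individual values $f_{\vec\alpha}(x_i)$. The first observation is that the hypotheses force $f_{\vec\alpha}(x_i)\neq 0$ for every $\vec\alpha\in\R$ and every $i$. Indeed, each $\vec\alpha\in\R$ has at least one nonzero coordinate $\alpha_{j_0}$, and since the $f_{\vec\alpha}$ are squarefree and pairwise coprime at most one of them vanishes at a given $x_i$; hence if some $f_{\vec\alpha}(x_i)=0$ then $F_{j_0}(x_i)=0$, contradicting $F_{j_0}(x_i)=a_{i,j_0}\in\Ff_q^*$.

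Next I would stratify the set according to the local values $b_{\vec\alpha,i}:=f_{\vec\alpha}(x_i)\in\Ff_q^*$, writing
$$\{(f_{\vec\alpha})\in\F_{\vec d(\vec\alpha)} : F_j(x_i)=a_{i,j}\} = \bigsqcup_{(b_{\vec\alpha,i})} \{(f_{\vec\alpha})\in\F_{\vec d(\vec\alpha)} : f_{\vec\alpha}(x_i)=b_{\vec\alpha,i}\},$$
the disjoint union ranging over tuples $(b_{\vec\alpha,i})$ with $b_{\vec\alpha,i}\in\Ff_q^*$ satisfying $\prod_{\vec\alpha}b_{\vec\alpha,i}^{\alpha_j}=a_{i,j}$ for all $i,j$. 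Each inner set prescribes the value of all $N$ polynomials at all $\ell$ points, so by Lemma \ref{setcount1} (with $N$ polynomials in place of $n$, and $U=1$) its cardinality is exactly $R_N^1(\ell)$. Crucially, this count is uniform, i.e.\ independent of the particular nonzero values $b_{\vec\alpha,i}$, so the stratification merely multiplies $R_N^1(\ell)$ by the number of strata.

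It then remains to count the strata. For each fixed $i$ the map $(\Ff_q^*)^\R\to(\Ff_q^*)^n$ sending $(b_{\vec\alpha})_{\vec\alpha}$ to $\bigl(\prod_{\vec\alpha}b_{\vec\alpha}^{\alpha_j}\bigr)_{j=1}^n$ is a homomorphism of finite abelian groups, and it is surjective: taking $\vec\alpha=\vec e_j$, the vector with a single $1$ in position $j$ (which lies in $\R$ since each $r_j\geq 2$), one attains any target. Hence every fibre has size $(q-1)^{N-n}$, and since the $\ell$ points are constrained independently there are $(q-1)^{(N-n)\ell}$ strata in total. Multiplying, $(q-1)^{(N-n)\ell}R_N^1(\ell)$ simplifies, using $(q-1)^{N-n}/(q-1)^N=(q-1)^{-n}$ together with $N=r_1\cdots r_n-1$, $L_{N-1}=L_{r_1\cdots r_n-2}$, $\zeta_q(2)^N=\zeta_q(2)^{r_1\cdots r_n-1}$ and $q+N=q+r_1\cdots r_n-1$, to precisely $S_n(\ell)$; the relative error $O\bigl(q^{-\min_{\vec\alpha}d_{\vec\alpha}/2}\bigr)$ of Lemma \ref{setcount1} survives the multiplication by the constant factor unchanged.

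The substantive content lies entirely in the two uniformity facts: that Lemma \ref{setcount1} yields a count independent of the prescribed nonzero values, and that the product map has constant fibre size $(q-1)^{N-n}$ by surjectivity. These are what let the seemingly more delicate product conditions collapse into a clean multiple of the coprime-family count; everything after that is bookkeeping of the constants against their counterparts in $S_n(\ell)$. I expect the only point requiring care is keeping the surjectivity argument honest — namely confirming that the standard basis vectors $\vec e_j$ genuinely belong to $\R$ (equivalently that each $r_j\ge 2$, which holds since each factor corresponds to a nontrivial cyclic piece) — as this is what guarantees the fibre size is exactly $(q-1)^{N-n}$ for \emph{every} choice of the $a_{i,j}$.
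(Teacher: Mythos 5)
Your proof is correct, but it takes a genuinely different route from the paper's. The paper never stratifies by the individual local values $f_{\vec\alpha}(x_i)$; instead it peels off the factors indexed by $\s=\bigcup_j\s_j$ (those $f_{\vec\alpha}$ occurring in exactly one $F_j$), fixes the shared factors $(f_{\vec\alpha})_{\vec\alpha\in\R\setminus\s}$, counts the inner family by Lemma \ref{setcount2} with $U=\prod_{\vec\alpha\in\R\setminus\s}f_{\vec\alpha}$ and shifted targets $b_{i,j}=a_{i,j}\prod_{\vec\alpha\in\R\setminus\s}f_{\vec\alpha}(x_i)^{-\alpha_j}$, and then evaluates the resulting outer sum of local factors $\prod_{P\mid U}|P|/(|P|+r_1+\dots+r_n-n)$ by iterated use of the multiplicative-function average of Lemma \ref{multfunc}, choosing $\epsilon$ at the end to merge the error terms. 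You replace all of this with a single application of Lemma \ref{setcount1} for $N=r_1\cdots r_n-1$ polynomials and $U=1$: your two pillars --- that $R_N^1(\ell)$ is independent of the prescribed nonzero values, and that the product homomorphism $(\Ff_q^*)^{\R}\to(\Ff_q^*)^n$ is surjective (via $\vec e_j\in\R$, legitimate since each $r_j\ge 2$) with constant fibres of size $(q-1)^{N-n}$ --- are both sound, and the bookkeeping $(q-1)^{(N-n)\ell}R_N^1(\ell)=S_n(\ell)$ checks out, with the number of strata independent of the degrees so the relative error survives intact. What the paper's two-step route buys is reusable intermediate machinery (Lemma \ref{setcount2} parallels the structure of \cite{BDFL1}) and explicit tracking of how the Euler factors recombine into the constants $L$ and the powers of $\zeta_q(2)$; what your route buys is brevity and transparency, bypassing Lemma \ref{setcount2} and Lemma \ref{multfunc} entirely --- and the value-uniformity you invoke is exactly the uniformity the paper's own argument silently uses when it feeds targets depending on the fixed factors into $R_n$ inside the proof of Lemma \ref{setcount2}. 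One cosmetic point: in your nonvanishing step the appeal to pairwise coprimality is superfluous, since $f_{\vec\alpha}\mid F_{j_0}$ for any coordinate with $\alpha_{j_0}\neq 0$ already yields the contradiction.
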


\begin{proof}

We will split the $F_j:=\prod_{\vec{\alpha}\in\R}f_{\vec{\alpha}}^{\alpha_j}$ into their coprime parts. In order to do this we will need some new notation. Define
\begin{align*}
\s_j := \{(0,\dots,0,\alpha_j,0,\dots,0): 1\leq \alpha_j \leq r_j-1\} \subset \R
\end{align*}
where the non-zero entry is in the $j^{th}$ coordinate. Define,
\begin{align*}
\s = \bigcup_{j=1}^n \s_j.
\end{align*}
Then the factor of $F_j = \prod_{\vec{\alpha}\in\R}f_{\vec{\alpha}}^{\alpha_j}$ that is coprime to all $F_i$ such that $i\not=j$ will be $\prod_{\vec{\alpha}\in\s_j}f_{\vec{\alpha}}^{\alpha_j}$. Further for any subset $\R'\subset \R$, define
\begin{align*}
\F^{\R'}_{(d(\vec{\alpha}))_{\vec{\alpha}\in\R'}} = \{(f_{\vec{\alpha}}) \in \prod_{\vec{\alpha} \in \R'}\F_{d(\vec{\alpha})} : (f_{\vec{\alpha}},f_{\vec{\beta}}) = 1, \mbox{ for } \alpha\not=\beta \in \R' \}.
\end{align*}
We will denote this as just $\F^{\R'}_{\vec{d}(\vec{\alpha})}$ with the understanding that in this context $\vec{d}(\vec{\alpha})$ is indexed by $\R'$ instead of $\R$.
Then,

\begin{align*}
S_n(\ell) &= |\{(f_{\vec{\alpha}}) \in \F_{\vec{d}(\vec{\alpha})} : \prod_{\vec{\alpha}\in\R}f_{\vec{\alpha}}^{\alpha_j}(x_i) = a_{i,j}, 1\leq i \leq \ell, 1 \leq j \leq n\}|\\
&= \sum_{\substack{ (f_{\vec{\alpha}})\in\F^{\R\setminus\s}_{\vec{d}(\vec{\alpha})} \\ f_{\vec{\alpha}}(x_i)\not=0 }} |\{ (f_{\vec{\alpha}}) \in \F^\s_{\vec{d}(\vec{\alpha})} : (f_{\vec{\alpha}},f_{\vec{\beta}}) =1, \vec{\alpha}\in\s, \vec{\beta}\in\R\setminus\s  \prod_{\vec{\alpha}\in\s_j}f_{\vec{\alpha}}^{\alpha_j}(x_i) = b_{i,j}, 1\leq j \leq n, 1 \leq i \leq \ell \}|\\
& = \sum_{\substack{ (f_{\vec{\alpha}})\in\F^{\R\setminus\s}_{\vec{d}(\vec{\alpha})} \\ f_{\vec{\alpha}}(x_i)\not=0 }} T_n^{\prod_{\vec{\alpha}\in\R\setminus\s} f_{\vec{\alpha}}}(\ell)
\end{align*}
where $T_n^{\prod_{\vec{\alpha}\in\R\setminus\s} f_{\vec{\alpha}}}(\ell)$ is as in Lemma \ref{setcount2} and
$$b_{i,j} = a_{i,j} \prod_{\vec{\alpha}\in\R\setminus S} f_{\vec{\alpha}}(x_i)^{-\alpha_j}.$$

Thus, similarly to the previous lemma,

\begin{align*}
S_n(\ell) &= \sum_{\substack{ (f_{\vec{\alpha}})\in\F^{\R\setminus\s}_{\vec{d}(\vec{\alpha})} \\ f_{\vec{\alpha}}(x_i)\not=0}} \frac{L_{r_1+\dots+r_n-n-1}q^{\sum_{\vec{\alpha}\in\s}\vec{d}(\vec{\alpha})}} {\zeta_q(2)^{r_1+\dots+r_n-n}} \left(\frac{q}{(q-1)^n(q+r_1+\dots+r_n-n)}\right)^\ell\\
&\times  \prod_{\substack{P|f_{\vec{\alpha}} \\ \vec{\alpha}\in \R\setminus\s}} \frac{|P|}{|P|+r_1+\dots+r_n-n}\left(1+O\left(q^{-\frac{\min_{\vec{\alpha}\in\s}{d_{\vec{\alpha}}}}{2}}\right)\right)\\
& = M \frac{L_{r_1+\dots+r_n-n-1}q^{\sum_{\vec{\alpha}\in\R}d_{\vec{\alpha}}}}{\zeta_q(2)^{r_1+\dots+r_n-n}} \left(\frac{q}{(q-1)^n(q+r_1+\dots+r_n-n)}\right)^\ell \left(1+O\left(q^{-\frac{\min_{\vec{\alpha}\in\s}{d_{\vec{\alpha}}}}{2}}\right)\right).
\end{align*}

By the same reasoning as in the proof of Lemma \ref{setcount2} we get
\begin{align*}
M &= \sum_{\substack{ (f_{\vec{\alpha}})\in\F^{\R\setminus\s}_{\vec{d}(\vec{\alpha})} \\ f_{\vec{\alpha}}(x_i)\not=0}} \prod_{\substack{P|f_{\vec{\alpha}} \\ \vec{\alpha}\in \R\setminus\s}} \frac{|P|}{|P|+r_1+\dots+r_n-n}\\
&= \frac{L_{r_1\dots r_n-1}q^{\sum_{{\vec{\alpha}}\in\R\setminus\s} d_{\vec{\alpha}}}} {L_{r_1+\dots+r_n-n-1}\zeta_q(2)^{r_1\cdots r_n-r_1-\dots r_n +n-1}}\left(\frac{q+r_1+\dots+r_n-n-1}{q+r_1\cdots r_n-1}\right)^\ell\left(1 + O\left(q^{-\epsilon\min_{{\vec{\alpha}}\in\R\setminus\s}d_{\vec{\alpha}}}\right)\right).
\end{align*}

Therefore, if we choose an appropriate $\epsilon$, we get the result.

\end{proof}

\section{The case $r=p^n$}\label{primepower}

In order to apply the counting formula of Proposition \ref{setcount} we want to write the $n$ functions defined by
$$F_{(p^j)}(X) = \prod_{i=1}^{p^n-1} f_i^{i \mod{p^j}} \quad 1 \leq j \leq n$$
in terms of coprime functions.

To apply the results of Section \ref{setcountsec} we define, in this context

$$\R = [0,\dots,p-1]^n\setminus\{(0,\dots,0)\}.$$
If $\vec{\alpha}\in\R$ we will denote it $\vec{\alpha}=(\alpha_1,\dots,\alpha_n)$. Rewrite
$$F := \prod_{\vec{\alpha}\in\R} f_{\vec{\alpha}}^{\alpha_1+p\alpha_2+\dots+p^{n-1}\alpha_n}.$$
That is we make the identification $f_{\alpha_1+p\alpha_2+\dots+p^{n-1}\alpha_n}\to f_{(\alpha_1,\dots,\alpha_n)}$

Recall the notation in Section \ref{Numpts} where we denoted, for $d|r$,
$$F_{(d)}(X) :=  \prod_{i=1}^{d-1}\left(\prod_{j=0}^{\frac{r}{d}-1} f_{jd+i}(X)\right)^{i} = \prod_{i=1}^{r-1} f_i(X)^{i \mod{d}}.$$
and we have
$$F_{(p^j)}(X) =  \prod_{\vec{\alpha}\in\R}f_{\vec{\alpha}}(X)^{\alpha_1+p\alpha_2+\dots+p^{j-1}\alpha_j} $$
for $j=1,\dots,n$. Further, if necessary, we denote $F_{(1)}(X)=1$.

Define $d_1:=d(1,0,\dots,0)$. Also use the notation $f_1 = f_{(1,0,\dots,0)}$. We will redefine the sets in Section \ref{intro} using this new notation.
$$\R' = \R\setminus \{(1,0,\dots,0)\}$$
$$\hat{\F}_{\vec{d}(\vec{\alpha})} = \{(f_1, (f_{\vec{\alpha}})) \in \hat{\F}_{d_1} \times \prod_{\vec{\alpha} \in \R'}\F_{d(\vec{\alpha})} : (f_{\vec{\alpha}},f_{\vec{\beta}}) = 1 \mbox{ for all } \vec{\alpha}\not =\vec{\beta}\}$$

Let
$$\delta(\vec{\alpha},\vec{\beta}) = \begin{cases} 1 & \vec{\alpha} = \vec{\beta} \\ 0 & \vec{\alpha} \not = \vec{\beta} \end{cases}.$$
Then define
$$\F^{\vec{\beta}}_{\vec{d}(\vec{\alpha})} = \{(f_{\vec{\beta}},(f_{\vec{\alpha}})) \in \F_{d(\vec{\beta})-1}\times \prod_{\vec{\alpha}\not=\vec{\beta}} \F_{d(\vec{\alpha})} : (f_{\vec{\alpha}},f_{\vec{\gamma}})=1, \vec{\alpha}\not=\vec{\gamma}\in\R \}.$$
Likewise define $\hat{\F}^{\vec{\beta}}_{\vec{d}(\vec{\alpha})}$ to be elements of $\F^{\vec{\beta}}_{\vec{d}(\vec{\alpha})}$ where $f_1$ is not necessarily monic. Further, define
$$\F_{[\vec{d}(\vec{\alpha})]} = \F_{\vec{d}(\vec{\alpha})}\cup\bigcup_{\vec{\beta}\in\R} {\F}^{\vec{\beta}}_{\vec{d} (\vec{\alpha})}$$
$$\hat{\F}_{[\vec{d}(\vec{\alpha})]} = \hat{\F}_{\vec{d}(\vec{\alpha})}\cup\bigcup_{\vec{\beta}\in\R} \hat{\F}^{\vec{\beta}}_{\vec{d}({\vec{\alpha}})}.$$

Now, if $x_{q+1}$ is the point at infinity, then,

$$F_{(p^j)}(x_{q+1}) = \begin{cases} \mbox{leading coefficient of } f_1 & (f_{\vec{\alpha}})\in\hat{\F}^{\beta}_{\vec{d}(\vec{\alpha})}\\ 0 & \mbox{otherwise}  \end{cases}$$
where $\vec{\beta}=(\beta_1,\dots,\beta_n)\in\R$ is any tuple such that $\beta_i=0$ for all $i\leq j$.

\begin{prop}\label{primepowerprop}

Let $a_{i,j} \in \Ff^*_q$, $1\leq j \leq n, 1 \leq i \leq \ell$ such that $a_{i,j} = a_{i,j-1}(b_{i,j})^{p^{j-1}}$ for some $b_{i,j} \in \Ff^*_q$. We let $a_{i,0}=1$ so that $a_{i,1}=b_{i,1}$. Then
\begin{align*}
&|\{(f_{\vec{\alpha}})\in\F_{\vec{d}(\vec{\alpha}))} : F_{(p^j)}(x_i) = a_{i,j}, 1\leq j\leq n, 1 \leq i \leq \ell\}| \\
&= \frac{L_{p^n-2} q^{\sum_{\vec{\alpha}\in\R}\vec{d}(\vec{\alpha})}}{\zeta_q(2)^{p^n-1}} \left(\frac{p^{\frac{n(n-1)}{2}}q} {(q-1)^n(q+p^n-1)}\right)^\ell\left(1 + O\left(q^{-\frac{\min_{\vec{\alpha}\in\R}d_{\vec{\alpha}}}{2}}\right)\right).
\end{align*}

\end{prop}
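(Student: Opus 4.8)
The plan is to reduce Proposition~\ref{primepowerprop} to Proposition~\ref{setcount} by translating the conditions on the values $F_{(p^j)}(x_i)$ into conditions on the coordinate products $\prod_{\vec{\alpha}\in\R}f_{\vec{\alpha}}^{\alpha_j}(x_i)$ that appear there. First I would set $u_j^{(i)} := \prod_{\vec{\alpha}\in\R} f_{\vec{\alpha}}^{\alpha_j}(x_i)$ for the $j$-th coordinate product evaluated at $x_i$, and use the factorization of the exponent $\alpha_1+p\alpha_2+\dots+p^{j-1}\alpha_j = \sum_{k=1}^j p^{k-1}\alpha_k$ to obtain
$$F_{(p^j)}(x_i) = \prod_{k=1}^{j}\Big(\prod_{\vec{\alpha}\in\R}f_{\vec{\alpha}}^{\alpha_k}(x_i)\Big)^{p^{k-1}} = \prod_{k=1}^j \big(u_k^{(i)}\big)^{p^{k-1}}.$$
In particular $F_{(p^j)}(x_i)/F_{(p^{j-1})}(x_i) = (u_j^{(i)})^{p^{j-1}}$ (using $F_{(1)}=1$ and $a_{i,0}=1$ for the degenerate case $j=1$), so the system $\{F_{(p^j)}(x_i)=a_{i,j}:1\le j\le n\}$ is equivalent to $u_1^{(i)}=a_{i,1}$ together with $(u_j^{(i)})^{p^{j-1}}=a_{i,j}/a_{i,j-1}$ for $2\le j\le n$.

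Next I would invoke the hypothesis $a_{i,j}=a_{i,j-1}b_{i,j}^{p^{j-1}}$ to rewrite the $j$-th equation as $(u_j^{(i)}/b_{i,j})^{p^{j-1}}=1$; that is, $u_j^{(i)}$ must lie in the coset $b_{i,j}\,\mu_{p^{j-1}}$, where $\mu_m\subset\Ff_q^*$ is the group of $m$-th roots of unity. Since $q\equiv 1\bmod p^n$ and $p^{j-1}\mid p^n$, the group $\mu_{p^{j-1}}$ has exactly $p^{j-1}$ elements, all lying in $\Ff_q^*$, so there are precisely $p^{j-1}$ admissible values of $u_j^{(i)}$ (and a single value $u_1^{(i)}=a_{i,1}$ when $j=1$). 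Hence the solution set of the $F_{(p^j)}$-conditions decomposes as a \emph{disjoint} union, over all choices of admissible coordinate values $(u_j^{(i)})_{1\le i\le \ell,\,1\le j\le n}$, of the sets counted by Proposition~\ref{setcount} with $r_1=\dots=r_n=p$ and target values $u_j^{(i)}$ in place of $a_{i,j}$. The number of such choices is $\prod_{i=1}^{\ell}\prod_{j=1}^{n}p^{j-1}=p^{\ell\, n(n-1)/2}$.

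Finally I would apply Proposition~\ref{setcount} to each piece. With $r_1\cdots r_n=p^n$ it gives, for every fixed admissible choice and independently of the particular $u_j^{(i)}$, the count
$$\frac{L_{p^n-2}\,q^{\sum_{\vec{\alpha}\in\R}d_{\vec{\alpha}}}}{\zeta_q(2)^{p^n-1}}\left(\frac{q}{(q-1)^n(q+p^n-1)}\right)^\ell\Big(1+O\big(q^{-\min_{\vec{\alpha}\in\R}d_{\vec{\alpha}}/2}\big)\Big).$$
Summing over the $p^{\ell\, n(n-1)/2}$ disjoint pieces multiplies the $\ell$-th power factor by $(p^{n(n-1)/2})^\ell$, i.e.\ replaces the $q$ in the numerator of the base by $p^{n(n-1)/2}q$, which produces exactly the stated main term. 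Since $q$ (and hence $\ell\le q$ and the number of pieces) is fixed while the $d_{\vec{\alpha}}\to\infty$, the finitely many error terms aggregate into a single $O(q^{-\min_{\vec{\alpha}}d_{\vec{\alpha}}/2})$.

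I expect the translation step to be the crux: establishing that fixing $F_{(p^j)}(x_i)$ is equivalent to choosing $u_j^{(i)}$ in a root-of-unity coset, and counting exactly $p^{j-1}$ choices at level $j$. Everything else is a direct appeal to Proposition~\ref{setcount}. The point requiring care is checking that $\prod_{j=1}^n p^{j-1}$ telescopes to $p^{n(n-1)/2}$, matching the factor in the statement, and that fixing $q$ keeps the summed error term uniformly $O(q^{-\min_{\vec{\alpha}}d_{\vec{\alpha}}/2})$.
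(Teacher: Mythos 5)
Your proposal is correct and is essentially the paper's own proof: your $u_j^{(i)}$ are exactly the values $F_j(x_i)$ of the coprime coordinate products $F_j=\prod_{\vec{\alpha}\in\R}f_{\vec{\alpha}}^{\alpha_j}$ used there, the telescoping relation $F_{(p^j)}=F_{(p^{j-1})}F_j^{p^{j-1}}$ yields the same coset conditions $F_j(x_i)=\epsilon_{i,j}b_{i,j}$ with $\epsilon_{i,j}\in\mu_{p^{j-1}}$, and the paper likewise concludes by applying Proposition \ref{setcount} and multiplying by the $\left(p^{\frac{n(n-1)}{2}}\right)^{\ell}$ admissible choices of roots of unity. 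The only difference is cosmetic: you make explicit the disjointness of the decomposition and the hypothesis $q\equiv 1 \bmod p^n$ guaranteeing $\mu_{p^{j-1}}\subset\Ff_q^*$, which the paper leaves implicit.
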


\begin{proof}

For $j=1,\dots,n$ define $F_{j} = \prod_{\vec{\alpha}\in\R}f_{\vec{\alpha}}^{\alpha_j}$. Then we can write $F_{(p^j)} = F_{(p^{j-1})}F_{j}^{p^{j-1}}$, where, again, we have $F_1=1$. Then $F_{(p^j)}(x_i)=a_{i,j}$ for all $i,j$ is equivalent to $F_{j}(x_i)=\epsilon_{i,j}b_{i,j}$ for all $i,j$ for some $\epsilon_{i,j}\in\mu_{p^{j-1}}$. Hence,

\begin{align*}
&|\{(f_{\vec{\alpha}})\in\F_{\vec{d}(\vec{\alpha})} : F_{(p^j)}(x_i) = a_{i,j}, 1\leq j\leq n, 1 \leq i \leq \ell\}| \\
& = |\{(f_{\vec{\alpha}})\in\F_{\vec{d}(\vec{\alpha})} : F_{j}(x_i) = \epsilon_{i,j}b_{i,j}, \epsilon_{i,j}\in \mu_{p^{j-1}},1\leq j\leq n, 1 \leq i \leq \ell\}|\\
& = \frac{L_{p^n-2} q^{\sum_{\vec{\alpha}\in\R}d(\vec{\alpha})}}{\zeta_q(2)^{p^n-1}} \left(\frac{p^{\frac{n(n-1)}{2}}q} {(q-1)^n(q+p^n-1)}\right)^\ell\left(1 + O\left(q^{-\frac{\min_{\vec{\alpha}\in\R}d(\vec{\alpha})}{2}}\right)\right).
\end{align*}

Where the second equality comes from Proposition \ref{setcount} and that there are $(p^{\frac{n(n-1)}{2}})^\ell$ different choices for the $\epsilon_{i,j}$.

\end{proof}

Now let us look at what happens if some of the $a_{i,j}=0$. Note that if $a_{i,j}=0$ then $a_{i,j+1}=0$ as $F_{p^j}|F_{p^{j+1}}$. Thus we have

\begin{cor}\label{primepowercor1}

Let $a_{i,j} \in \Ff_q$, $1\leq j \leq n, 1 \leq i \leq \ell$ such that
$$a_{i,1},\dots,a_{i,k} \not=0 \mbox{ and } a_{i,k+1},\dots,a_{i,n}=0 \mbox{ for } \sum_{i=0}^{k-1}m_i+1 \leq i \leq \sum_{i=0}^{k-1}m_i+m_k$$
for $k=0,\dots,n-1$ and $a_{i,j} \not=0$ for all $j$ for $\sum_{i=0}^{n-1}m_i +1\leq i \leq \ell$. Further, if $a_{i,j}\not=0$ then $a_{i,j} = a_{i,j-1}(b_{i,j})^{p^{j-1}}$ for some $b_{i,j} \in \Ff^*_q$. Then
$$|\{(f_{\vec{\alpha}})\in\F_{\vec{d}(\vec{\alpha})} : F_{p^j}(x_i) = a_{i,j}, 1\leq j\leq n, 1 \leq i \leq \ell\}|$$
\begin{align*}
= &\frac{L_{p^n-2} q^{\sum_{\vec{\alpha}\in\R}d(\vec{\alpha})}}{\zeta_q(2)^{p^n-1}} \prod_{k=0}^{n-1}\left( \frac{(p-1)p^{n-k-1}(q-1)^{n-k}}{qp^{\frac{(n-k)(n+k-1)}{2}}} \right)^{m_k}\\
& \times \left(\frac{p^{\frac{n(n-1)}{2}}q} {(q-1)^n(q+p^n-1)}\right)^\ell\left(1 + O\left(q^{-\frac{\min_{\vec{\alpha}\in\R}d(\vec{\alpha})}{2}}\right)\right).
\end{align*}

\end{cor}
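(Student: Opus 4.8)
The plan is to reduce everything to Proposition \ref{primepowerprop} (equivalently Proposition \ref{setcount}) by conditioning on the local behaviour at each point $x_i$ where some $a_{i,j}$ vanishes, and to show that the main term factorises over the $x_i$ so that each such point contributes one explicit extra weight. Recall $F_j = \prod_{\vec{\alpha}\in\R}f_{\vec{\alpha}}^{\alpha_j}$ and $F_{(p^j)} = F_{(p^{j-1})}F_j^{p^{j-1}}$, with $F_{(1)}=1$. Fix a point $x_i$ of type $k$, by which I mean $a_{i,1},\dots,a_{i,k}\ne 0$ and $a_{i,k+1}=\dots=a_{i,n}=0$. Since $F_{(p^k)}=\prod_{j=1}^k F_j^{p^{j-1}}$, the condition $F_{(p^k)}(x_i)\ne 0$ forces $F_1(x_i),\dots,F_k(x_i)\ne 0$, while $F_{(p^{k+1})}(x_i)=F_{(p^k)}(x_i)F_{k+1}(x_i)^{p^k}=0$ then forces $F_{k+1}(x_i)=0$. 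Because the $f_{\vec{\alpha}}$ are pairwise coprime, at most one of them vanishes at $x_i$; hence there is a unique $\vec{\gamma}_i\in\R$ with $f_{\vec{\gamma}_i}(x_i)=0$, and the above says exactly that $(\gamma_i)_1=\dots=(\gamma_i)_k=0$, $(\gamma_i)_{k+1}\ne 0$, while $(\gamma_i)_{k+2},\dots,(\gamma_i)_n$ are free. Thus there are precisely $(p-1)p^{n-k-1}$ admissible vectors $\vec{\gamma}_i$, and this is the source of the factor $(p-1)p^{n-k-1}$.

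First I would sum over the choice of $\vec{\gamma}_i$ at each bad point; for a fixed assignment, group the bad points by the vector to which they are attached and, for each $\vec{\gamma}$, factor out the corresponding linear polynomials, writing $f_{\vec{\gamma}}=\big(\prod(X-x_i)\big)\tilde{f}_{\vec{\gamma}}$. As each configuration determines its vanishing vectors uniquely, this sum is a genuine partition with no overcounting. After extraction the reduced tuple $(\tilde{f}_{\vec{\alpha}})$ has no zero at any $x_i$, its total degree is $\sum_{\vec{\alpha}}d(\vec{\alpha})$ lowered by the number of bad points (a factor $q^{-1}$ per bad point in the main term $q^{\sum d(\vec\alpha)}$), and the surviving conditions at a type-$k$ point read $F_{(p^j)}(x_i)=a_{i,j}$ for $j\le k$ only, the conditions for $j>k$ being automatic from the removed root. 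Exactly as in Proposition \ref{primepowerprop}, for $j\le k$ this is equivalent to $F_j(x_i)=\epsilon_{i,j}b_{i,j}$ for some $\epsilon_{i,j}\in\mu_{p^{j-1}}$, giving $\prod_{j=1}^k p^{j-1}=p^{k(k-1)/2}$ choices, whereas $F_{k+1}(x_i),\dots,F_n(x_i)$ are now unconstrained and range over $(\Ff_q^*)^{n-k}$. Since the count in Proposition \ref{setcount} does not depend on the prescribed nonzero values, summing it over these $\epsilon_{i,j}$ and over the $(q-1)^{n-k}$ free values yields the contribution of a single type-$k$ point.

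Collecting the factors, a type-$k$ point contributes $(p-1)p^{n-k-1}\cdot p^{k(k-1)/2}\cdot(q-1)^{n-k}\cdot q^{-1}$ times the value-point factor $\frac{q}{(q-1)^n(q+p^n-1)}$ of Proposition \ref{setcount}; dividing by the good-point factor $\frac{p^{n(n-1)/2}q}{(q-1)^n(q+p^n-1)}$ of Proposition \ref{primepowerprop} and using $\frac{n(n-1)}{2}-\frac{k(k-1)}{2}=\frac{(n-k)(n+k-1)}{2}$ gives precisely the extra weight $E_k=\frac{(p-1)p^{n-k-1}(q-1)^{n-k}}{qp^{(n-k)(n+k-1)/2}}$. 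Multiplying the good-point factor over all $\ell$ points and $E_k$ over the $m_k$ points of type $k$ produces the claimed formula. The hard part is the bookkeeping in the middle step: one must check that summing Proposition \ref{setcount} over the unconstrained coordinates $j>k$ (legitimate precisely because its count is independent of the nonzero targets) really enumerates the partially constrained configurations, that the value shifts introduced when the linear factors are removed — the known factors $(x_i-x_{i'})^{\cdots}$ coming from roots of $f_{\vec{\gamma}}$ extracted for other points — are absorbed harmlessly into the $b_{i,j}$ by the same value-independence, and that several bad points sharing one $\vec{\gamma}$ merely lower a single degree by their multiplicity without disturbing the factorisation. The error term survives because $\ell\le q$ is fixed while $\min_{\vec{\alpha}}d(\vec{\alpha})\to\infty$, so only boundedly many assignments occur and every reduced application of Proposition \ref{setcount} still has all degrees tending to infinity.
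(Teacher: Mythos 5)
Your proof is correct and takes essentially the same route as the paper's: at each bad point of type $k$ you select the unique vanishing $f_{\vec{\gamma}}$ (with $\gamma_1=\dots=\gamma_k=0$, $\gamma_{k+1}\neq 0$, giving the factor $(p-1)p^{n-k-1}$ and the partition of $m_k$), strip its linear factors to reduce to the all-nonzero count of Proposition \ref{primepowerprop}, and collect the $p^{k(k-1)/2}$ root-of-unity choices for $j\leq k$, the $(q-1)^{n-k}$ free values for $j>k$, and the $q^{-1}$ degree drop per bad point, exactly reproducing the weight $E_k$. Your write-up is in fact more explicit than the paper's terse proof about why the reduction is legitimate (value-independence of Proposition \ref{setcount} absorbing the shifts from removed roots), but it is the same argument.
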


\begin{proof}

Consider $i$ such that $\sum_{i=0}^{k-1}m_i+1\leq i\leq\sum_{i=0}^{k-1}m_i+m_k$. Then $f_{\vec{\alpha}}(x_i)=0$ for some $\vec{\alpha}$ such that $\alpha_{k+1}\not=0$ but $\alpha_j=0$ for all $j<k+1$. There are $(p-1)p^{n-j-1}$ different such $\vec{\alpha}$. Fix a partition $m_k := \sum_{\vec{\alpha}} m_{k,\vec{\alpha}}$ where the sum is over all such $\vec{\alpha}$ defined above and $m_{k,\vec{\alpha}}$ is the number of times $f_{\vec{\alpha}}(x_i)=0$.

Define $f'_{\vec{\alpha}}$ as $f_{\vec{\alpha}}$ divided by its roots and $F'_{p^j}$ as the corresponding product of the $f'_{\vec{\alpha}}$. Now $F'_{p^j}(x_i)$ is determined by $F_{p^j}(x_i)$ for $j\leq k$. $F'_{p^j}(x_i)$ will be fixed, up to a $p^{j-1}$th root of unity, for $j+1\leq k\leq n$. Thus we get a factor of
$$\left(\prod_{j=k+1}^{n} \frac{q-1}{p^{j-1}}\right)^{m_k} = \left( \frac{(q-1)^{n-k}}{p^{\frac{(n-k)(n+k-1)} {2}}} \right)^{m_k}.$$
Summing up over all the partitions of $m_k$ we get
\begin{align*}
&|\{(f_{\vec{\alpha}})\in\F_{\vec{d}(\vec{\alpha})} : F_{p^j}(x_i) = a_{i,j}, 0\leq j\leq n, 1 \leq i \leq \ell\}|\\
= & \prod_{k=0}^{n-1}\left((p-1)p^{n-k-1}\right)^{m_k} \frac{L_{p^n-2} q^{\sum_{\vec{\alpha}\in\R}d(\vec{\alpha}) - \sum_{k=0}^{n-1} m_k}}{\zeta_q(2)^{p^n-1}} \prod_{k=0}^{n-1}\left( \frac{(q-1)^{n-j}}{p^{\frac{(n-k)(n+k-1)}{2}}} \right)^{m_k}\\
& \times \left(\frac{p^{\frac{n(n-1)}{2}}q} {(q-1)^n(q+p^n-1)}\right)^\ell \left(1 + O\left(q^{-\frac{\min_{\vec{\alpha}\in\R}d(\vec{\alpha})}{2}}\right)\right) \\
= & \frac{L_{p^n-2} q^{\sum_{\vec{\alpha}\in\R}d(\vec{\alpha})}}{\zeta_q(2)^{p^n-1}} \prod_{k=0}^{n-1}\left( \frac{(p-1)p^{n-k-1}(q-1)^{n-k}}{qp^{\frac{(n-k)(n+k-1)}{2}}} \right)^{m_k}\left(\frac{p^{\frac{n(n-1)}{2}}q} {(q-1)^n(q+p^n-1)}\right)^\ell \left(1 + O\left(q^{-\frac{\min_{\vec{\alpha}\in\R}d(\vec{\alpha})}{2}}\right)\right).
\end{align*}

\end{proof}

\begin{cor}\label{primepowercor2}

Let $\epsilon_{i,j} \in \mu_{p^j}\cup\{0\}$, $1\leq j \leq n, 1 \leq i \leq \ell$ such that
$$\epsilon_{i,1},\dots,\epsilon_{i,k} \not=0 \mbox{ and } \epsilon_{i,k+1},\dots,\epsilon_i^n=0 \mbox{ for } \sum_{i=0}^{k-1}m_i+1 \leq i \leq \sum_{i=0}^{k-1}m_i+m_k$$
for $k=0,\dots,n-1$ and $\epsilon_{i,j} \not=0$ for all $j$ for $\sum_{i=0}^{n-1}m_i +1\leq i \leq \ell$. Further, if $\epsilon_{i,j}\not=0$ then $\epsilon_{i,j-1} = (\epsilon_{i,j})^p$. Then
$$|\{(f_{\vec{\alpha}})\in\F_{\vec{d}(\vec{\alpha})} : \chi_{p^j}(F_{p^j}(x_i)) = \epsilon_{i,j}, 1\leq j\leq n, 1 \leq i \leq \ell\}|$$
$$ =  \frac{L_{p^n-2} q^{\sum_{\vec{\alpha}\in\R}d(\vec{\alpha})}}{\zeta_q(2)^{p^n-1}} \prod_{k=0}^{n-1}\left( \frac{(p-1)p^{2n-2k-1}}{q} \right)^{m_k}\left(\frac{q} {p^n(q+p^n-1)}\right)^\ell\left(1 + O\left(q^{-\frac{\min_{\vec{\alpha}\in\R}d(\vec{\alpha})}{2}}\right)\right).$$

\end{cor}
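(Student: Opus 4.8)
The plan is to reduce the count involving the character values $\chi_{p^j}(F_{p^j}(x_i)) = \epsilon_{i,j}$ to the count of Corollary \ref{primepowercor1}, which fixes the actual values $F_{p^j}(x_i) = a_{i,j}$ in $\Ff_q$. The key observation is that specifying $\chi_{p^j}(F_{p^j}(x_i)) = \epsilon_{i,j} \in \mu_{p^j}$ does not pin down $F_{p^j}(x_i)$ to a single element of $\Ff_q^*$; rather, it says that $F_{p^j}(x_i)$ lies in the fiber $\chi_{p^j}^{-1}(\epsilon_{i,j})$, which is a coset of the subgroup of $p^j$-th powers in $\Ff_q^*$. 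Since $q \equiv 1 \bmod p^n$, this subgroup has index $p^j$, so each such coset contains exactly $(q-1)/p^j$ elements. Thus each character condition at a point where $\epsilon_{i,j}\neq 0$ corresponds to $(q-1)/p^j$ choices of the genuine value $a_{i,j}$; whereas $\epsilon_{i,j}=0$ simply means $F_{p^j}(x_i)=0$.

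First I would handle the compatibility constraints. The hypothesis $\epsilon_{i,j-1}=(\epsilon_{i,j})^p$ is exactly the translate (under $\chi$) of the relation $a_{i,j}=a_{i,j-1}b_{i,j}^{p^{j-1}}$ that was imposed in Corollary \ref{primepowercor1}: applying $\chi_{p^{j-1}}$ to $F_{p^{j-1}}(x_i)=a_{i,j-1}$ against $F_{p^j}(x_i)=a_{i,j}$ and using $F_{(p^j)}=F_{(p^{j-1})}F_j^{p^{j-1}}$ forces precisely this power relation on the character values. So the $a_{i,j}$ ranging over the appropriate cosets automatically satisfy the chain condition, and I would verify that the free parameters are the $b_{i,j}$, each ranging over $(q-1)/p^{j-1}$ values once the cosets are fixed. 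Next I would sum the count of Corollary \ref{primepowercor1} over all tuples $(a_{i,j})$ compatible with the prescribed character values: for each index $i$ in the block where $\epsilon_{i,k+1},\dots,\epsilon_{i,n}=0$ but $\epsilon_{i,1},\dots,\epsilon_{i,k}\neq 0$, the number of admissible $(a_{i,1},\dots,a_{i,k})$ is the product of the coset sizes, which telescopes against the $(q-1)$ and $p$-power factors already present in Corollary \ref{primepowercor1}.

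The bookkeeping step is then to collect exponents. For each $i$ in the $k$-th block I expect the conversion to contribute a factor that multiplies the Corollary \ref{primepowercor1} weight $\frac{(p-1)p^{n-k-1}(q-1)^{n-k}}{q\,p^{(n-k)(n+k-1)/2}}$ by the number of value-tuples mapping to the fixed character-tuple, and similarly to convert the global factor $\bigl(\frac{p^{n(n-1)/2}q}{(q-1)^n(q+p^n-1)}\bigr)^\ell$ into $\bigl(\frac{q}{p^n(q+p^n-1)}\bigr)^\ell$. Concretely, each of the $\ell$ points contributes an extra factor of $(q-1)^n/p^{n(n-1)/2}$ worth of coset choices across $j=1,\dots,n$, cancelling the $(q-1)^n$ and the $p^{n(n-1)/2}$ in the denominator and numerator respectively; the per-block factors then simplify to $\bigl(\frac{(p-1)p^{2n-2k-1}}{q}\bigr)^{m_k}$. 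I would carry out this exponent arithmetic carefully using $\sum_{j=1}^n (j-1)=\tfrac{n(n-1)}{2}$ and the analogous partial sums.

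The main obstacle I anticipate is the exponent accounting: keeping straight which powers of $p$, of $q-1$, and of $q$ arise from (i) the coset-size factors $(q-1)/p^j$ in the conversion, (ii) the pre-existing factors in Corollary \ref{primepowercor1}, and (iii) the global normalization, and checking that they recombine exactly into the stated expression with no leftover discrepancy. The error term $O(q^{-\min d_{\vec\alpha}/2})$ carries through unchanged since the conversion is a finite reindexing that is independent of the $d_{\vec\alpha}$, so no new error is introduced.
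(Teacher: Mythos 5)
Your overall strategy is exactly the paper's: the paper's proof of Corollary \ref{primepowercor2} converts each nonzero character condition $\chi_{p^k}(F_{p^k}(x_i))=\epsilon_{i,k}$ into $\frac{q-1}{p^k}$ admissible values of $F_{p^k}(x_i)$, notes that $F_{p^k}(x_i)=0$ for $1\leq i\leq \sum_{j=0}^{k-1}m_j$, and simply multiplies the count of Corollary \ref{primepowercor1} by $\prod_{k=1}^n\bigl(\frac{q-1}{p^k}\bigr)^{\ell-\sum_{j=0}^{k-1}m_j}$ before simplifying. Your first paragraph even makes explicit something the paper leaves implicit: the compatibility $\epsilon_{i,j-1}=(\epsilon_{i,j})^p$ means $\chi_{p^{j-1}}(a_{i,j-1})=\chi_{p^{j-1}}(a_{i,j})$, so every element of the fiber $\chi_{p^j}^{-1}(\epsilon_{i,j})$ automatically satisfies the chain relation $a_{i,j}=a_{i,j-1}b_{i,j}^{p^{j-1}}$, and the coset choices at different $j$ are independent.

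However, the bookkeeping in your last two paragraphs contains a concrete slip that contradicts your own (correct) first paragraph and, followed literally, fails to produce the stated formula. The per-point conversion factor for a point with all $\epsilon_{i,j}\not=0$ is $\prod_{j=1}^n\frac{q-1}{p^j}=\frac{(q-1)^n}{p^{n(n+1)/2}}$, not $\frac{(q-1)^n}{p^{n(n-1)/2}}$; the relevant sum is $\sum_{j=1}^n j=\frac{n(n+1)}{2}$, not $\sum_{j=1}^n(j-1)$. With your factor, $\frac{p^{n(n-1)/2}q}{(q-1)^n(q+p^n-1)}$ converts to $\frac{q}{q+p^n-1}$, leaving the count too large by $p^{n\ell}$ and missing the $p^n$ in the denominator of the target $\frac{q}{p^n(q+p^n-1)}$. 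The source of the error is the claim that the free parameters are the $b_{i,j}$ ranging over $\frac{q-1}{p^{j-1}}$ values: fixing the cosets pins down $\chi_p(b_{i,j})$, so $b_{i,j}$ ranges over a coset of $p$-th powers of size $\frac{q-1}{p}$, and the map $b\mapsto b^{p^{j-1}}$ is $p^{j-1}$-to-one, yielding $\frac{q-1}{p^j}$ distinct values of $a_{i,j}$ — consistent with the fiber count in your first paragraph, and it is the values $a_{i,j}$, not the $b_{i,j}$, that index the sets counted in Corollary \ref{primepowercor1}. Using $\frac{q-1}{p^j}$ consistently, with exponent $\ell-\sum_{j=0}^{k-1}m_j$ on the $k$-th factor (block-$k$ points contribute cosets only for $j\leq k$), the computation closes as in the paper: for the per-block factors one checks $\frac{n(n+1)}{2}-\frac{k(k+1)}{2}-\frac{(n-k)(n+k-1)}{2}=n-k$, so $(p-1)p^{n-k-1}\cdot p^{n-k}=(p-1)p^{2n-2k-1}$ as stated. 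Your remark that the error term passes through unchanged is correct.
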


\begin{proof}
For $k=1,\dots,n$, $F_{p^k}(x_i)=0$ for $1\leq i \leq \sum_{j=0}^{k-1}m_j$, so for $\sum_{j=0}^{k-1}m_j+1 \leq i \leq \ell$, $F_{p^k}(x_i)$ has $\frac{q-1}{p^k}$ choices. Hence
\begin{align*}
&|\{(f_{\vec{\alpha}})\in\F_{\vec{d}(\vec{\alpha})} : \chi_{p^j}(F_{p^j}(x_i)) = \epsilon_{i,j}, 1\leq j\leq n, 1 \leq i \leq \ell\}|\\
= & \prod_{k=1}^n \left(\frac{q-1}{p^k}\right)^{\ell -\sum_{j=0}^{k-1}m_j }  \frac{L_{p^n-2} q^{\sum_{\vec{\alpha}\in\R}d(\vec{\alpha})}}{\zeta_q(2)^{p^n-1}} \prod_{k=0}^{n-1}\left( \frac{(p-1)p^{n-k-1}(q-1)^{n-k}}{qp^{\frac{(n-k)(n+k-1)}{2}}} \right)^{m_k}\\
& \times \left(\frac{p^{\frac{n(n-1)}{2}}q} {(q-1)^n(q+p^n-1)}\right)^\ell\left(1 + O\left(q^{-\frac{\min_{\vec{\alpha}\in\R}d(\vec{\alpha})}{2}}\right)\right) \\
= & \frac{L_{p^n-2} q^{\sum_{\vec{\alpha}\in\R}d(\vec{\alpha})}}{\zeta_q(2)^{p^n-1}} \prod_{k=0}^{n-1}\left( \frac{(p-1)p^{2n-2k-1}}{q} \right)^{m_k}\left(\frac{q} {p^n(q+p^n-1)}\right)^\ell\left(1 + O\left(q^{-\frac{\min_{\vec{\alpha}\in\R}d(\vec{\alpha})}{2}}\right)\right).
\end{align*}

\end{proof}

Up until now, we have been looking only at points in $\Ff_q$. What we need to look at however, is points in $\mathbb{P}^1(\Ff_q)$. This is taken care of my the following corollary

\begin{cor}\label{primepowercor3}

Let $\epsilon_{i,j} \in \mu_{p^j}\cup\{0\}$, $1\leq j \leq n, 1 \leq i \leq q+1$ such that
$$\epsilon_{i,1}\dots,\epsilon_{i,k} \not=0 \mbox{ and } \epsilon_{i,k+1},\dots,\epsilon_{i,n}=0 \mbox{ for } \sum_{i=0}^{k-1}m_i+1 \leq i \leq \sum_{i=0}^{k-1}m_i+m_k$$
for $k=0,\dots,n-1$ and $\epsilon_{i,j} \not=0$ for all $j$ for $\sum_{i=0}^{n-1}m_i +1\leq i \leq q+1$. Further, if $\epsilon_{i,j}\not=0$ then $\epsilon_{i,j-1} = (\epsilon_{i,j})^p$. Then
$$\frac{|\{(f_{\vec{\alpha}})\in\hat{\F}_{[\vec{d}(\vec{\alpha})]} : \chi_{p^j}(F_{p^j}(x_i)) = \epsilon_{i,j}, 1\leq j\leq n, 1 \leq i \leq q+1\}|}{|\hat{\F}_{[\vec{d}(\vec{\alpha})]}|}$$
$$ =  \prod_{k=0}^{n-1}\left( \frac{(p-1)p^{2n-2k-1}}{q} \right)^{m_k}\left(\frac{q} {p^n(q+p^n-1)}\right)^\ell\left(1 + O\left(q^{-\frac{\min_{\vec{\alpha}\in\R}d(\vec{\alpha})}{2}}\right)\right).$$

\end{cor}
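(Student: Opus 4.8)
The plan is to reduce the statement to Corollary \ref{primepowercor2} by isolating the three ways in which it differs from that corollary: the leading coefficient $\alpha$ of $f_1$ is now free in $\Ff_q^*$, the degree of one polynomial $f_{\vec\beta}$ is allowed to drop by one, and the value at the point at infinity $x_{q+1}$ is prescribed. I treat the denominator first, then reduce the finite-point conditions, and finally absorb the point at infinity.

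For the denominator, I decompose $\hat\F_{[\vec d(\vec\alpha)]} = \hat\F_{\vec d(\vec\alpha)}\cup\bigcup_{\vec\beta\in\R}\hat\F^{\vec\beta}_{\vec d(\vec\alpha)}$ and apply Proposition \ref{primepowerprop} with $\ell=0$ to each piece. The free choice of $\alpha$ contributes a factor $q-1$, the main piece has weight $1$, and each of the $p^n-1$ degree-drop pieces has weight $q^{-1}$, so that
\begin{equation*}
|\hat\F_{[\vec d(\vec\alpha)]}| = (q-1)\,\frac{L_{p^n-2}\,q^{\sum_{\vec\alpha\in\R}d(\vec\alpha)}}{\zeta_q(2)^{p^n-1}}\cdot\frac{q+p^n-1}{q}\,\Bigl(1+O\bigl(q^{-\frac12\min_{\vec\alpha}d(\vec\alpha)}\bigr)\Bigr).
\end{equation*}
The factor $(q+p^n-1)/q$ is the source of the denominator $q+p^n-1$ in the statement.

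For the numerator I first dispose of the finite points. Since $f_1=f_{(1,0,\dots,0)}$ occurs to the first power in every $F_{(p^j)}$, writing $f_1=\alpha\tilde f_1$ with $\tilde f_1$ monic multiplies each finite value $F_{(p^j)}(x_i)$ by $\alpha$; hence $\chi_{p^j}(F_{(p^j)}(x_i))=\epsilon_{i,j}$ is equivalent, for the monic tuple, to the same condition with target $\epsilon_{i,j}\chi_{p^j}(\alpha)^{-1}$. Multiplication by this root of unity preserves both the vanishing pattern and the compatibility $\epsilon_{i,j-1}=\epsilon_{i,j}^p$, so for each fixed $\alpha$ the monic configurations over the $q$ finite points $x_1,\dots,x_q$ are counted by Corollary \ref{primepowercor2}, with the same type data. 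To handle the point at infinity I use that $F_{(p^j)}(x_{q+1})$ equals the leading coefficient of $f_1$ exactly when $\deg F_{(p^j)}\equiv0\pmod{p^j}$ and vanishes otherwise: this holds for every $j$ on the main piece, and on $\hat\F^{\vec\beta}_{\vec d(\vec\alpha)}$ exactly for the $j$ strictly below the first nonzero coordinate of $\vec\beta$. Grouping the pieces by the index $k_\infty$ for which $\epsilon_{q+1,1},\dots,\epsilon_{q+1,k_\infty}\neq0$ and the remaining $\epsilon_{q+1,j}=0$, the value $k_\infty=n$ is realised only by the main piece, while each $0\le k_\infty\le n-1$ is realised by the $(p-1)p^{n-k_\infty-1}$ degree-drop pieces whose first nonzero coordinate equals $k_\infty+1$; in either case the prescribed value at infinity is equivalent to the constraint $\chi_{p^{k_\infty}}(\alpha)=\epsilon_{q+1,k_\infty}$ (vacuous when $k_\infty=0$), leaving $\tfrac{q-1}{p^{k_\infty}}$ admissible $\alpha$.

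The one genuinely delicate step is the final matching. After multiplying the Corollary \ref{primepowercor2} count for the finite points by the number of compatible pieces (with their degree-drop weights $q^{-1}$) and by the number $\tfrac{q-1}{p^{k_\infty}}$ of admissible $\alpha$, and dividing by $|\hat\F_{[\vec d(\vec\alpha)]}|$, one must verify that the factor $(q+p^n-1)/q$ cancels and that what remains is exactly one additional copy of the finite-point local weight $\bigl(\tfrac{(p-1)p^{2n-2k_\infty-1}}{q}\bigr)^{[k_\infty<n]}\tfrac{q}{p^n(q+p^n-1)}$ attached to $x_{q+1}$. Concretely this raises the base factor from the $q$ finite points to all $q+1$ points and, when $k_\infty<n$, increments $m_{k_\infty}$ by one in $\prod_{k=0}^{n-1}(\cdots)^{m_k}$, which yields the stated expression; the error terms are controlled exactly as in the preceding results, so this combinatorial identity, not any analytic estimate, is the crux.
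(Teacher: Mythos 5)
Your proposal is correct and follows essentially the same route as the paper: a case split on the vanishing pattern at $x_{q+1}$, identifying the main piece (all $\epsilon_{q+1,j}\neq0$) versus the $(p-1)p^{n-k_\infty-1}$ degree-drop pieces, counting the $\tfrac{q-1}{p^{k_\infty}}$ admissible leading coefficients, and applying Corollary \ref{primepowercor2} to the finite points with $m_{k_\infty}$ decremented. You merely make explicit two things the paper leaves implicit -- the computation of $|\hat{\F}_{[\vec{d}(\vec{\alpha})]}|$ with its factor $\tfrac{q+p^n-1}{q}$, and the twist of the targets by $\chi_{p^j}(\alpha)^{-1}$ when reducing to monic tuples -- and your degree-divisibility criterion at infinity agrees with the convention actually used in the paper's Case 1 and Case 2.
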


\begin{proof}
\textbf{Case 1:} $\epsilon_{q+1,k}\not=0$ for all $1\leq k \leq n$.

In this case we get that $(f_{\vec{\alpha}})\in\hat{\F}_{\vec{d}(\vec{\alpha})}$. Further the leading coefficient of $f_1$ under $\chi_{p^n}$ must be $\epsilon_{q+1,n}$. Thus

$$\frac{q-1}{p^n}|\{(f_{\vec{\alpha}})\in{\F}_{\vec{d}(\vec{\alpha})} : \chi_{p^j}(F_{p^j}(x_i)) = \epsilon_{i,j}, 1\leq j\leq n, 1 \leq i \leq q\}|$$
$$ = \frac{q-1}{p^n}\frac{L_{p^n-2} q^{\sum_{\vec{\alpha}\in\R}d(\vec{\alpha})}}{\zeta_q(2)^{p^n-1}} \prod_{k=0}^{n-1}\left( \frac{(p-1)p^{2n-2k-1}}{q} \right)^{m_k}\left(\frac{q} {p^n(q+p^n-1)}\right)^q$$
$$ = \frac{(q-1)(q+p^n-1)}{q}\frac{L_{p^n-2} q^{\sum_{\vec{\alpha}\in\R}d(\vec{\alpha})}}{\zeta_q(2)^{p^n-1}} \prod_{k=0}^{n-1}\left( \frac{(p-1)p^{2n-2k-1}}{q} \right)^{m_k}\left(\frac{q} {p^n(q+p^n-1)}\right)^{q+1}.$$

\textbf{Case 2:} $\epsilon_{q+1,1},\dots,\epsilon_{q+1,k} \not=0$ and $\epsilon_{q+1,k+1},\dots,\epsilon_{q+1,n}=0$ for some $k$.

In this case we get $(f_{\vec{\alpha}})\in\hat{\F}^{\vec{\beta}}_{(d(\vec{\alpha}))}$ where $\vec{\beta}=(\beta_1,\dots,\beta_n) \in\R$ such that $\beta_j=0$ for all $j\leq k$ and $\beta_{k+1}\not=0$. There are $(p-1)p^{n-k-1}$ such $\vec{\beta}$. Further $m_k$ will go to $m_k-1$ and the leading coefficient of $f_1$ under $\chi_{p^k}$ must be $\epsilon_{q+1}^k$. Then,

\begin{align*}
& \frac{q-1}{p^k} \sum_{\vec{\beta}}|\{(f_{\vec{\alpha}})\in{\F}^{\vec{\beta}}_{\vec{d}(\vec{\alpha})} : \chi_{p^j}(F_{p^j}(x_i)) = \epsilon_{i,j}, 1\leq j\leq n, 1 \leq i \leq q\}| \\
= & \frac{q-1}{p^k}(p-1)p^{n-k-1}\frac{L_{p^n-2} q^{\sum_{\vec{\alpha}\in\R}d(\vec{\alpha})-1}}{\zeta_q(2)^{p^n-1}} \prod_{j=0}^{n-1}\left( \frac{(p-1)p^{2n-2j-1}}{q} \right)^{m_j}\\
& \times \left(\frac{(p-1)p^{2n-2k-1}}{q} \right)^{-1}\left(\frac{q} {p^n(q+p^n-1)}\right)^q \left(1 + O\left(q^{-\frac{\min_{\vec{\alpha}\in\R}d(\vec{\alpha})}{2}}\right)\right) \\
= & \frac{(q-1)(q+p^n-1)}{q}\frac{L_{p^n-2} q^{\sum_{\vec{\alpha}\in\R}d(\vec{\alpha})}}{\zeta_q(2)^{p^n-1}} \prod_{j=0}^{n-1}\left( \frac{(p-1)p^{2n-2j-1}}{q} \right)^{m_j}\\
& \times\left(\frac{q} {p^n(q+p^n-1)}\right)^{q+1} \left(1 + O\left(q^{-\frac{\min_{\vec{\alpha}\in\R}d(\vec{\alpha})} {2}}\right)\right).
\end{align*}

Thus, independent of the behavior at $x_{q+1}$, we get out result.

\end{proof}

Which leads us to restate and prove Theorem \ref{mainthm} for $r=p^n$.

\begin{thm}\label{primepowerthm}

Let $M_j\in\mathbb{Z}[\zeta_{p^j}]$ for $j=1,\dots,n$. Then
$$\frac{|\{(f_{\vec{\alpha}})\in\hat{\F}_{[\vec{d}(\vec{\alpha})]} : S_{p^j}(F_{p^j}) = M_j, 1\leq j\leq n\}|}{|\hat{\F}_{[\vec{d}(\vec{\alpha})]}|}$$
$$= \Prob\left( \sum_i X_{i,1} = M_1 \mbox{ and } \dots \mbox{ and } \sum_i X_{i,n} = M_n \right) \left(1 + O\left(q^{-\frac{\min_{\vec{\alpha}\in\R}d(\vec{\alpha})}{2}}\right)\right)$$
where the $X_{i,j}$ are random variables that take values in $\mu_{p^j}\cup\{0\}$ such that $X_{i,j}$ and $X_{h,k}$ are i.i.d unless $i=h$ and
\begin{align*}
&\Prob(X_{i,j}=0) = \frac{p^n-p^{n-j}}{q+p^n-1} & \Prob(X_{i,j} = \epsilon_{i,j}) = \frac{q+p^{n-j}-1}{p^j(q+p^n-1)}\\
&\Prob(X_{i,j+1}=0 | X_{i,j} =0)=1 & \Prob(X_{i,j-1}=(X_{i,j})^p | X_{i,j} \not =0)=1
\end{align*}
\begin{align*}
&\Prob(X_{i,1} = \epsilon_{i,1},\dots,X_{i,j}=\epsilon_{i,j} \mbox{ and } X_{i,j+1}, \dots, X_{i,n} =0) = \begin{cases}\frac{(p-1)p^{n-2j-1}}{q+p^n-1} & \epsilon_{i,k-1} = (\epsilon_{i,k})^p, k=2,\dots,j \\ 0 & \mbox{otherwise} \end{cases}\\
&\Prob(X_{i,1}=\epsilon_{i,1},\dots,X_{i,n} =\epsilon_{i,n}) = \begin{cases} \frac{q}{p^n(q+p^n-1)} &\epsilon_{i,k-1} = (\epsilon_{i,k})^p, k=2,\dots,n \\ 0 & \mbox{otherwise} \end{cases}
\end{align*}
\end{thm}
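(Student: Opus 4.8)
The plan is to reduce the statement to the purely local computation already carried out in Corollary \ref{primepowercor3}, by expanding each $S_{p^j}(F_{p^j})$ as a sum over the $q+1$ points of $\Pp^1(\Ff_q)$ and partitioning the counting set according to the array of local values.

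First I would write, for each $j$,
$$S_{p^j}(F_{p^j}) = \sum_{i=1}^{q+1} \chi_{p^j}(F_{p^j}(x_i)),$$
and record at each point $x_i$ the local datum $\epsilon_{i,j} := \chi_{p^j}(F_{p^j}(x_i)) \in \mu_{p^j}\cup\{0\}$. The event $\{S_{p^j}(F_{p^j}) = M_j,\ 1\le j\le n\}$ is then the disjoint union, over all arrays $(\epsilon_{i,j})$ with $\sum_{i=1}^{q+1}\epsilon_{i,j}=M_j$ for every $j$, of the sets counted in Corollary \ref{primepowercor3}. The relation $F_{(p^j)} = F_{(p^{j-1})}F_j^{p^{j-1}}$ used in the proof of Proposition \ref{primepowerprop} forces the compatibility constraints $\epsilon_{i,j}=0\Rightarrow\epsilon_{i,j+1}=0$ and $\epsilon_{i,j-1}=\epsilon_{i,j}^p$ whenever $\epsilon_{i,j}\neq0$; arrays violating these index empty sets, matching exactly the ``otherwise $0$'' branch of the claimed probabilities.

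Next I would apply Corollary \ref{primepowercor3} to each admissible array. The decisive step is to recognize that its proportion
$$\prod_{k=0}^{n-1}\Big(\tfrac{(p-1)p^{2n-2k-1}}{q}\Big)^{m_k}\Big(\tfrac{q}{p^n(q+p^n-1)}\Big)^{q+1}$$
factors over the $q+1$ points: writing $m_n := q+1-\sum_{k<n}m_k$ for the number of full-depth points and absorbing one factor $\tfrac{q}{p^n(q+p^n-1)}$ into each point, the contribution of a depth-$k$ point $(0\le k\le n-1)$ becomes $\tfrac{(p-1)p^{2n-2k-1}}{q}\cdot\tfrac{q}{p^n(q+p^n-1)}=\tfrac{(p-1)p^{n-2k-1}}{q+p^n-1}$, while each full-depth point keeps $\tfrac{q}{p^n(q+p^n-1)}$. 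These are precisely the single-index joint probabilities $\Prob(X_{i,1}=\epsilon_{i,1},\dots,X_{i,n}=\epsilon_{i,n})$ displayed at the end of the theorem. Since the $X_{i,\cdot}$ are independent across $i$, the product over the points equals the joint probability of the entire array, and summing over all admissible arrays with $\sum_i\epsilon_{i,j}=M_j$ yields $\Prob(\sum_i X_{i,j}=M_j,\ \forall j)$.

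Finally I would handle the error term. The number of admissible arrays is a constant depending only on $p$, $n$ and $q$ (each point has finitely many compatible tower values, namely $1+\sum_{k=1}^n p^k$), hence independent of the degrees $\vec d(\vec\alpha)$; each application of Corollary \ref{primepowercor3} carries the same relative error $O\big(q^{-\min_{\vec\alpha}d(\vec\alpha)/2}\big)$, and since all main terms are nonnegative, summing them produces the factor $\big(1+O(q^{-\min d/2})\big)$ multiplying $\Prob(\cdots)$, as claimed. I expect the main obstacle to be organizational rather than conceptual: verifying that the map from admissible local arrays to random-variable events is a genuine bijection respecting the compatibility relations, and checking the per-point factorization identity exactly, so that the across-points independence built into Corollary \ref{primepowercor3} lines up with the stated independence of the $X_{i,\cdot}$.
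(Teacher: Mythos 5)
Your proposal is correct and follows essentially the same route as the paper: expanding each $S_{p^j}(F_{p^j})$ over the $q+1$ points, summing over admissible arrays $(\epsilon_{i,j})$ with the tower-compatibility constraints, applying Corollary \ref{primepowercor3} to each, and rewriting $\bigl(\tfrac{(p-1)p^{2n-2k-1}}{q}\bigr)\bigl(\tfrac{q}{p^n(q+p^n-1)}\bigr)=\tfrac{(p-1)p^{n-2k-1}}{q+p^n-1}$ so the count factors into the stated per-point joint probabilities. You in fact spell out the per-point factorization and the uniformity of the error term (finitely many arrays, bounded independently of the degrees) more explicitly than the paper does, but the argument is the same.
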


\begin{proof}

$$\frac{|\{(f_{\vec{\alpha}})\in\hat{\F}_{[\vec{d}(\vec{\alpha})]} : S_{p^j}(F_{p^j}) = M_j, 1\leq j\leq n\}|}{|\hat{\F}_{[\vec{d}(\vec{\alpha})]}|}$$
$$ = \sum_{\substack{(E_{1,j},\dots,E_{q+1,j}) \in \mu_{p^j} \cup \{0\} \\ \sum_i E_{i,j} = M_j \\ E_{i,j}=0 \implies E_{i,j+1} = 0 \\ E_{i,j} \not=0 \implies E_{i,j-1} = (E_{i,j})^p \\ j=1,\dots,n }} \prod_{k=0}^{n-1}\left( \frac{(p-1)p^{2n-2k-1}}{q} \right)^{m_k}\left(\frac{q} {p^n(q+p^n-1)}\right)^{q+1}\left(1 + O\left(q^{-\frac{\min_{\vec{\alpha}\in\R}d(\vec{\alpha})}{2}}\right)\right)$$
$$ = \sum_{\substack{(E_{1,j},\dots,E_{q+1,j}) \in \mu_{p^j} \cup \{0\} \\ \sum_i E_{i,j} = M_j \\ E_{i,j}=0 \implies E_{i,j+1} = 0 \\ E_{i,j} \not=0 \implies E_{i,j-1} = (E_{i,j})^p \\ j=1,\dots,n }} \prod_{k=0}^{n-1}\left( \frac{(p-1)p^{n-2k-1}}{q+p^n-1} \right)^{m_k}\left(\frac{q} {p^n(q+p^n-1)}\right)^{q+1-\sum m_k}\left(1 + O\left(q^{-\frac{\min_{\vec{\alpha}\in\R}d(\vec{\alpha})}{2}}\right)\right) $$
$$ = \mbox{Prob}\left( \sum_i X_{i,1} = M_1 \mbox{ and } \dots \mbox{ and } \sum_i X_{i,n} = M_n \right)\left(1 + O\left(q^{-\frac{\min_{\vec{\alpha}\in\R}d(\vec{\alpha})}{2}}\right)\right)$$
where the $X_i$ have the desired properties.
\end{proof}

\section{General $r$} \label{generalr}

Suppose now that $r=r_1r_2$ with $(r_1,r_2)=1$. Then, relabeling the $f_i$ as $f_{i,j}$ we can write
$$F(X) = \prod_{\substack{i=0,\dots,r_1-1\\j=0,\dots,r_2-1\\(i,j)\not=(0,0)}}f_{i,j}(X)^{s_2r_2i+s_1r_1j \mod{r}}$$
where $s_1 \equiv r_1^{-1} \mod{r_2}$ and $s_2 \equiv r_2^{-1}\mod{r_1}$. With this notation we now get
\begin{align*}
& F_{r_1} = \prod_{\substack{i=0,\dots,r_1-1\\j=0,\dots,r_2-1\\(i,j)\not=(0,0)}} f_{i,j}^{i \mod{r_1}} & F_{r_2} = \prod_{\substack{i=0,\dots,r_1-1\\j=0,\dots,r_2-1\\(i,j)\not=(0,0)}} f_{i,j}^{j \mod{r_2}}.
\end{align*}
Therefore
$$F_{r_1}(X)^{s_2r_2}F_{r_2}(X)^{s_1r_1} = F(X) \left(\prod_{i,j}f_{i,j}(X)^{n_{i,j}}\right)^r$$
for some, potentially $0$, exponents $n_{i,j}$. Further we see that $F(x)=0$ if and only if $F_{r_1}F_{r_2}(x)=0$ as all the factors that appear in $F$ appear in $F_{r_1}F_{r_2}$. Therefore the values of $F_d(x)$ for all $d|r$ are determined by the values of $F_{p^n}(x)$ where $p$ is a prime such that $p^n|r$. That is,
$$|\{F \in \F_{(d_1,\dots,d_r}) : \chi_d(F_{d}(x_i)) = \epsilon_{d,i}, \mbox{ for all } d|r, 1\leq i \leq \ell\}|$$
$$ = |\{F \in \F_{(d_1,\dots,d_r}) : \chi_d(F_{d}(x_i)) = \epsilon_{d,i}, d=p^n|r, p \mbox{ a prime}, 1\leq i \leq \ell\}|$$

Now, suppose $r=p_1^{t_1}\cdots p_n^{t_n}$. Define
$$\R' = [0,\dots,p_1^{t_1}-1] \times \dots \times [0,\dots,p_n^{t_n}-1] \setminus\{(0,\dots,0)\}$$
Let
$$\phi:\R' \to [1,\dots,r-1] $$
be the isomorphism that comes from the Chinese Remainder Theorem. Then if $F=\prod_{i=1}^{r-1}f_i^i$, we can relabel the $f_i$ to get
$$F = \prod_{\vec{\beta}\in\R'} f_{\vec{\beta}}^{\phi(\vec{\beta})}.$$

Notice that $\phi(\vec{\beta}) \equiv k \mod{p_j^{t_j}}$ if and only if $\beta_j =k$. Therefore
$$F_{(p_j^{t_j})} = \prod_{\vec{\beta}\in\R'} f_{\vec{\beta}}^{\phi(\vec{\beta}) \mod{p_j^{t_j}}} = \prod_{k=0}^{p_j^{t_j}-1} \prod_{\substack{\beta\in\R' \\ \beta_j = k}} f_{\vec{\beta}}^k  = \prod_{\vec{\beta}\in\R'}f_{\vec{\beta}}^{\beta_j}.$$

However, we need all powers of the primes. Therefore we define

$$\R = [0,\dots,p_1-1]^{t_1}\times\dots\times[0,\dots,p_n-1]^{t_n} \setminus \{(0,\dots,0)\}.$$
Let $T_j = \sum_{i=1}^{j} t_i$. As usual, for $\vec{\alpha}\in\R$ we write it as $\vec{\alpha} =(\alpha_1,\dots,\alpha_{T_n})$. Then there is an isomorphism $\psi:\R \to\R'$ such that
$$ \psi(\vec{\alpha}) = (\alpha_1 + p_1\alpha_2 + \dots+p_1^{t_1-1}\alpha_{T_1}, \dots, \alpha_{T_{n-1}+1} + p_n \alpha_{T_{n-1}+2} + \dots + p_n^{t_n-1}\alpha_{T_n})$$
Therefore, if we relabel the $f_{\vec{\beta}}$ we get
$$F_{(p_j^{t_j})}(X) = \prod_{\vec{\alpha}\in\R} f_{\vec{\alpha}}^{\alpha_{T_{j-1}+1} + p\alpha_{T_{j-1}+2} + \dots p^{t_j-1}\alpha_{T_j}}(X)$$

Moreover, for any $1\leq k_j \leq t_j$
$$F_{(p_j^{k_j})}(X) = \prod_{\vec{\alpha}\in\R} f_{\vec{\alpha}}^{\alpha_{T_{j-1}+1} + p\alpha_{T_{j-1}+2} + \dots p^{k_j-1}\alpha_{T_{j-1}+k_j}}(X)$$

Then we get
$$|\{F \in \F_{(d_1,\dots,d_{r-1})} : F_{(p_j^{k_j})}(x_i) = a_{i,j,k_j}, 1 \leq j \leq n, 1 \leq k_j \leq t_j, 1 \leq i \leq \ell\}|$$
$$ = |\{(f_{\vec{\alpha}})\in \F_{\vec{d}(\vec{\alpha})} : F_{(p_j^{k_j})}(x_i)  = a_{i,j,k_j}, 1 \leq j \leq n, 1 \leq k_j \leq t_j, 1 \leq i \leq \ell\}|$$

where $d(\vec{\alpha}) = d_{\phi\circ\psi(\vec{\alpha})}$.

\begin{prop}\label{generalprop}
Let $a_{i,j,k_j}\in\Ff^*_q$ such that $a_{i,j,k_j}=a_{i,j,k_j-1}(b_{i,j,k_j})^{p^{k_j-1}}$ for some $b_{i,j,k_j}\in\Ff^*_q$. Further we let that $a_{i,j,0}=1$ so that $a_{i,j,1} = b_{i,j,1}$.
\begin{align*}
&|\{(f_{\vec{\alpha}})\in\F_{\vec{d}(\vec{\alpha})} : F_{(p_j^{k_j})}(x_i) = a_{i,j,k_j}, 1\leq j \leq n, 1\leq k_j \leq t_j, 1\leq i \leq \ell\}|\\
=&\frac{L_{r-2} q^{\sum_{\vec{\alpha}\in\R}d(\vec{\alpha})}}{\zeta_q(2)^{r-1}} \left(\frac{q \prod_{j=1}^n p^{\frac{t_j(t_j-1)}{2}}}{ (q-1)^{T_n}(q+r-1)}\right)^\ell\left(1 + O\left(q^{-\frac{\min_{\vec{\alpha}\in\R}d(\vec{\alpha})}{2}}\right)\right).
\end{align*}
\end{prop}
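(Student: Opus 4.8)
The plan is to mirror the proof of Proposition \ref{primepowerprop}, treating each prime-power block $p_j^{t_j}$ separately but counting all blocks simultaneously through a single application of Proposition \ref{setcount} to the full index set $\R$. Note that $\R$ here has $T_n = t_1+\dots+t_n$ coordinates and that the total number of factors is $\prod_{j=1}^n p_j^{t_j} = r$; hence Proposition \ref{setcount}, applied with the $T_n$ one-coordinate functions $F_s := \prod_{\vec{\alpha}\in\R} f_{\vec{\alpha}}^{\alpha_s}$ for $1\le s \le T_n$, already produces the prefactor $\frac{L_{r-2}q^{\sum_{\vec{\alpha}}d(\vec{\alpha})}}{\zeta_q(2)^{r-1}}$ together with the term $\left(\frac{q}{(q-1)^{T_n}(q+r-1)}\right)^\ell$ and the stated error. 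The only remaining task is to bridge between prescribing the values of the composite functions $F_{(p_j^{k_j})}$ and prescribing the values of these atomic functions $F_s$.

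First I would introduce, for each prime index $1\le j \le n$ and each $1\le m \le t_j$, the coordinate function $F_{j,m} := \prod_{\vec{\alpha}\in\R}f_{\vec{\alpha}}^{\alpha_{T_{j-1}+m}}$ (with $T_0=0$); these are exactly the $F_s$ above after the relabeling $s = T_{j-1}+m$. From the formula for $F_{(p_j^{k_j})}$ recorded just before the proposition, one reads off the telescoping identity $F_{(p_j^{k_j})} = F_{(p_j^{k_j-1})}\,F_{j,k_j}^{\,p_j^{k_j-1}}$, with the convention $F_{(p_j^0)}=1$. Arguing by induction on $k_j$ exactly as in Proposition \ref{primepowerprop}, and using the hypothesis $a_{i,j,k_j}=a_{i,j,k_j-1}(b_{i,j,k_j})^{p_j^{k_j-1}}$, the condition $F_{(p_j^{k_j})}(x_i)=a_{i,j,k_j}$ becomes equivalent to $F_{j,k_j}(x_i)^{p_j^{k_j-1}}=(b_{i,j,k_j})^{p_j^{k_j-1}}$, i.e.\ to $F_{j,k_j}(x_i)=\epsilon_{i,j,k_j}\,b_{i,j,k_j}$ for some $\epsilon_{i,j,k_j}\in\mu_{p_j^{k_j-1}}$. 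Here one invokes that $p_j^{k_j-1}\mid q-1$ (which holds since $p_j^{t_j}\mid r\mid q-1$ and $k_j\le t_j$), so the $p_j^{k_j-1}$-power map on $\Ff_q^*$ has kernel of size exactly $p_j^{k_j-1}$.

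Consequently the set being counted decomposes as a disjoint union, over all admissible tuples of roots of unity $(\epsilon_{i,j,k_j})$, of sets to which Proposition \ref{setcount} applies verbatim with the prescribed nonzero values $F_{j,m}(x_i)=\epsilon_{i,j,m}b_{i,j,m}$; each such set has size $S_{T_n}(\ell)$. It remains to count the admissible tuples. For fixed $i$ their number is $\prod_{j=1}^n\prod_{m=1}^{t_j}|\mu_{p_j^{m-1}}| = \prod_{j=1}^n p_j^{\,t_j(t_j-1)/2}$, so over all $1\le i\le\ell$ there are $\bigl(\prod_{j=1}^n p_j^{\,t_j(t_j-1)/2}\bigr)^\ell$ of them. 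Multiplying this by $S_{T_n}(\ell)$ collapses the two factors into $\left(\frac{q\prod_{j=1}^n p_j^{\,t_j(t_j-1)/2}}{(q-1)^{T_n}(q+r-1)}\right)^\ell$ and yields the stated formula, with the error term inherited unchanged from Proposition \ref{setcount}.

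I expect the main obstacle to be purely notational rather than conceptual: keeping the two-tier indexing (prime block $j$ versus power $m$ or $k_j$) consistent, and verifying carefully that the telescoping identity reduces each composite constraint on $F_{(p_j^{k_j})}$ to a single atomic constraint on $F_{j,k_j}$ up to a factor in $\mu_{p_j^{k_j-1}}$. Once that correspondence and the divisibility $p_j^{t_j}\mid q-1$ are pinned down, the counting is immediate from Proposition \ref{setcount} together with the prime-power argument of Proposition \ref{primepowerprop}.
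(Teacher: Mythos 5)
Your proposal is correct and takes essentially the same route as the paper: the paper likewise uses the telescoping identity from Proposition \ref{primepowerprop} to replace each composite constraint $F_{(p_j^{k_j})}(x_i)=a_{i,j,k_j}$ by the atomic constraint $F_{(j,k_j)}(x_i)=\epsilon_{i,j,k_j}b_{i,j,k_j}$ with $\epsilon_{i,j,k_j}\in\mu_{p_j^{k_j-1}}$, relabels the pairs $(j,k_j)$ to $\{1,\dots,T_n\}$ via $\phi(j,k_j)=T_{j-1}+k_j$, and applies Proposition \ref{setcount}, absorbing the $\bigl(\prod_{j=1}^n p_j^{t_j(t_j-1)/2}\bigr)^\ell$ root-of-unity choices into the $\ell$-th power factor. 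If anything, your version is slightly more careful than the paper's, spelling out the induction, the divisibility $p_j^{k_j-1}\mid q-1$, and the disjointness of the union over admissible tuples, all of which the paper leaves implicit.
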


\begin{proof}
As we saw in the proof of Proposition \ref{primepowerprop}, it will be enough to consider what values the polynomials
$$F_{(j,k_j)} = \prod_{\vec{\alpha}\in\R} f_{\vec{\alpha}}^{\alpha_{T_{j-1}+k_j}}$$
up to some root of unity. That is,
$$|\{(f_{\vec{\alpha}})\in\F_{\vec{d}(\vec{\alpha})} : F_{(p_j^{k_j})}(x_i) = a_{i,j,k_j}, 1\leq j \leq n, 1\leq k_j \leq t_j, 1\leq i \leq \ell\}|$$
$$ = |\{(f_{\vec{\alpha}})\in\F_{\vec{d}(\vec{\alpha})} : F_{(j,k_j)}(x_i) = \epsilon_{i,j,k_j}b_{i,j,k_j}, \epsilon_{i,j,k_j} \in \mu_{p^{k_j-1}},1\leq j \leq n, 1\leq k_j \leq t_j, 1\leq i \leq \ell\}|.$$

We can define an isomorphism
$$\phi: \{(j,k_j): 1\leq j \leq n, 1 \leq k_j \leq T_j\} \to \{1,\dots,T_n\}$$
by
$$\phi(j,k_j) = T_{j-1}+k_j.$$
Let $\epsilon_{i,j} = \epsilon_{i,\phi^{-1}(j)}$, $\beta_{i,j} = \beta_{i, \phi^{-1}(j)}$ and $F_j = F_{\phi^{-1}(j)}$, then
$$F_j = \prod_{\vec{\alpha}\in\R}f_{\vec{\alpha}}^{\alpha_j}.$$
If we denote $\mu_{m} = \mu_{p_j^{k_j}}$ where $\phi(j,k_j)=m$ then,

$$|\{(f_{\vec{\alpha}})\in\F_{\vec{d}(\vec{\alpha})} : F_{(j,k_j)}(x_i) = \epsilon_{i,j,k_j}b_{i,j,k_j}, \epsilon_{i,j,k_j} \in \mu_{p_j^{k_j-1}},1\leq j \leq n, 1\leq k_j \leq t_j, 1\leq i \leq \ell\}|$$
$$ = |\{(f_{\vec{\alpha}})\in\F_{\vec{d}(\vec{\alpha})} : F_{j}(x_i) = \epsilon_{i,j}b_{i,j}, \epsilon_{i,j} \in \mu_{j},1\leq j \leq T_n, 1\leq i \leq \ell\}|$$
$$ = \frac{L_{r-2} q^{\sum_{\vec{\alpha}\in\R}d(\vec{\alpha})}}{\zeta_q(2)^{r-1}} \left(\frac{q \prod_{j=1}^n p^{\frac{t_j(t_j-1)}{2}}}{ (q-1)^{T_n}(q+r-1)}\right)^\ell \left(1 + O\left(q^{-\frac{\min_{\vec{\alpha}\in\R}d(\vec{\alpha})}{2}}\right)\right)$$
where the last equality comes from Proposition \ref{setcount}.

\end{proof}

Define
$$\s = [0,\dots,t_1]\times\dots\times[0,\dots,t_n]\setminus{(t_1,\dots,t_n)}.$$
For any $S\in\s$ we will write $S = (s_1,\dots,s_n)$.  We will use $\s$ to count the number of $1\leq i\leq \ell$ such that $F_{(p_j^{k_j})}(x_i)=0$. That is, for every $S=(s_1,\dots,s_n)\in\s$ we say that for some $i$ $F_{(p_j^{s_j+1})}(x_i)=0$ (and hence $F_{(p_j^{k_j})}(x_i)=0$ for all $k_j >s_j$). If $s_j=t_j$ this will correspond to the case that $F_{(p_j^{k_j})}(x_i)\not=0$ for all $1\leq i\leq \ell$ for all $k_j$, hence why we exclude the element $(t_1,\dots,t_n)$ from $\s$ as this corresponds to the case that there are no zeros, which we treated in Proposition \ref{generalprop}. With this motivation we will define $J_{\s} = \{j : s_j \not = t_j\}$.

\begin{cor}\label{generalcor}
Let $a_{i,j,k_j}\in\Ff_q$ such that for $m_S$ of the $i$ we have
$$a_{i,j,1}, \dots, a_{i,j,s_j}\not=0 \mbox{ and } a_{i,j,s_j+1},\dots,a_{i,j,t_j} =0,\quad 1\leq j \leq n$$
for all $S\in\s$. Further if $a_{i,j,k_j}\not=0$ then $a_{i,j,k_j}=a_{i,j,k_j-1}(b_{i,j,k_j})^{p^{k_j-1}}$ for some $b_{i,j,k_j}\in\Ff^*_q$. Again, we set $a_{i,j,0}=1$ so that $a_{i,j,1} = b_{i,j,1}$. Then,
\begin{align*}
&|\{(f_{\vec{\alpha}})\in\F_{\vec{d}(\vec{\alpha})} : F_{(p_j^{k_j})}(x_i) = a_{i,j,k_j}, 1\leq j \leq n, 1\leq k_j \leq t_j, 1\leq i \leq \ell\}|\\
=& \frac{L_{r-2} q^{\sum_{\vec{\alpha}\in\R}d(\vec{\alpha})}}{\zeta_q(2)^{r-1}} \prod_{S\in\s} \left( \frac{\prod_{j\in J_S} (p_j-1)p_j^{t_j-s_j} \prod_{j=1}^n p_j^{\frac{s_j(s_j-1)}{2}}}{(q+r-1)\prod_{j=1}^n (q-1)^{s_j}} \right)^{m_S}\\
& \times \left(\frac{q \prod_{j=1}^n p_j^{\frac{t_j(t_j-1)}{2}}}{ (q-1)^{T_n}(q+r-1)}\right)^{\ell-\sum_{S\in\s}m_S} \left(1 + O\left(q^{-\frac{\min_{\vec{\alpha}\in\R}d(\vec{\alpha})}{2}}\right)\right).
\end{align*}
\end{cor}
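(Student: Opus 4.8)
The plan is to run the same argument that established Corollary \ref{primepowercor1}, replacing the prime-power input by Proposition \ref{generalprop}, which now plays the role of the ``no vanishing'' base count. First I would localize the zeros: since the $f_{\vec{\alpha}}$ are pairwise coprime, at every point $x_i$ at most one factor $f_{\vec{\alpha}}$ can vanish, so each of the $m_S$ points carrying a given pattern $S=(s_1,\dots,s_n)\in\s$ singles out one admissible index $\vec{\alpha}$. To identify which indices are admissible I would read the exponent $\alpha_{T_{j-1}+1}+p_j\alpha_{T_{j-1}+2}+\dots+p_j^{k_j-1}\alpha_{T_{j-1}+k_j}$ occurring in $F_{(p_j^{k_j})}$ as the base-$p_j$ expansion of an integer, so that this exponent vanishes exactly when the first $k_j$ digits of the $j$-th block of $\vec{\alpha}$ are zero. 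The requirement that $F_{(p_j^{k_j})}(x_i)\neq 0$ for $k_j\leq s_j$ and $F_{(p_j^{k_j})}(x_i)=0$ for $k_j>s_j$ then forces, for each $j\in J_S$, the first $s_j$ digits of block $j$ to vanish and the $(s_j+1)$-st to be nonzero, while for $j\notin J_S$ the whole block must be zero. Counting the remaining free digits blockwise gives the number of admissible $\vec{\alpha}$ as a product over $j\in J_S$ of $(p_j-1)$ times a power of $p_j$.

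Next I would partition the $m_S$ points among the admissible indices exactly as in the proof of Corollary \ref{primepowercor1}, strip the factor $(X-x_i)$ from each vanishing $f_{\vec{\alpha}}$ to form $f'_{\vec{\alpha}}$ (lowering its degree by one), and observe that the de-rooted products $F'_{(p_j^{k_j})}$ now take nonzero values at $x_i$. The crucial point is that the low-level products ($k_j\leq s_j$) remain pinned to the prescribed nonzero values $a_{i,j,k_j}$, whereas the high-level products ($k_j>s_j$) become completely unconstrained. I would then feed the de-rooted family into Proposition \ref{generalprop} and sum over all power-compatible fillings of these freed high-level values: for each prime $p_j$ with $j\in J_S$ the number of admissible tuples $(a_{i,j,s_j+1},\dots,a_{i,j,t_j})$ is $\prod_{k_j=s_j+1}^{t_j}(q-1)/p_j^{k_j-1}$, since each successive value must lie in a prescribed coset of the $p_j^{k_j-1}$-st powers, and each de-rooting contributes a compensating $q^{-1}$ from the dropped degree.

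Finally I would assemble the three ingredients---the admissible-index count, the free-value count, and the degree factors---and simplify. The $(q-1)$-powers collapse using $\sum_{j\in J_S}(t_j-s_j)=T_n-\sum_j s_j$, and the $p_j$-powers collapse using $\sum_{m=s_j}^{t_j-1}m=\tfrac{(s_j+t_j-1)(t_j-s_j)}{2}$ together with the identity $t_j(t_j-1)-(s_j+t_j-1)(t_j-s_j)=s_j(s_j-1)$. Rewriting everything relative to the base factor $\frac{q\prod_j p_j^{t_j(t_j-1)/2}}{(q-1)^{T_n}(q+r-1)}$ raised to $\ell-\sum_{S\in\s}m_S$ then yields the claimed per-pattern factor, and the error term $1+O(q^{-\min_{\vec{\alpha}}d(\vec{\alpha})/2})$ is carried through unchanged, exactly as in Corollary \ref{primepowercor1}.

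The step I expect to be the main obstacle is precisely this last simplification: tracking the powers of the several primes $p_j$ simultaneously and correctly re-normalizing from the $(\,\cdot\,)^\ell$ form delivered by Proposition \ref{generalprop} to the $(\,\cdot\,)^{\ell-\sum_{S\in\s}m_S}$ form of the statement. Disentangling the three separate sources of $p_j$-powers---the admissible indices, the coset counts for the freed values, and the root-of-unity multiplicities already folded into the base factor---is delicate, and an off-by-one in the exponent of each $p_j$ is the easiest error to commit here.
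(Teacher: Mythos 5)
Your proposal is correct and follows essentially the same route as the paper: the paper's own proof of Corollary \ref{generalcor} is exactly your argument in compressed form (de-root each vanishing $f_{\vec{\alpha}}$, observe that $F'_{(p_j^{k_j})}(x_i)$ is pinned by $a_{i,j,k_j}$ for $k_j\leq s_j$ and free only within a coset of the $p_j^{k_j-1}$-st powers, of size $(q-1)/p_j^{k_j-1}$, for $k_j>s_j$, then sum over partitions of the $m_S$ points among admissible indices and feed the de-rooted family into Proposition \ref{generalprop}), with your blockwise base-$p_j$ digit count of admissible $\vec{\alpha}$ being the correct generalization of the count $(p-1)p^{n-k-1}$ in Corollary \ref{primepowercor1}. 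On the off-by-one you flag as the main hazard: your digit analysis yields $\prod_{j\in J_S}(p_j-1)p_j^{t_j-s_j-1}=\phi\left(\frac{r}{d}\right)$ with $d=\prod_j p_j^{s_j}$, and this is the value forced by consistency with Corollary \ref{primepowercor1} (specialize to one prime), with the paper's own remark identifying the product with $\phi\left(\frac{r}{d}\right)$, and with the probabilities in Theorem \ref{mainthm} summing to one, so the exponent $t_j-s_j$ printed in the statement of Corollary \ref{generalcor} is a typo for $t_j-s_j-1$ and your derivation, not the printed factor, is the one to trust.
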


\begin{proof}

If we replace $f_{\vec{\alpha}}$ by $f'_{\vec{\alpha}}$, the polynomial divided by its roots, and $F'_{(p_j^{k_j})}$ as the corresponding product of $f'_{\vec{\alpha}}$. $F'_{(p_j^{k_j})}(x_i)$ will be determined by $F_{(p_j^{k_j})}(x_i)$ for $k_j \leq s_j$ and $F'_{(p_j^{k_j})}(x_i)$ will be determined, up to a $p^{k_j-1}$th root of unity, by $F_{(p_j^{k_j-1})}(x_i)$ for $s_j < k_j \leq t_j$. Summing up over all the necessary partitions of $m_S$ will give the desired result.

\end{proof}

Note that if we let $d=\prod_{j=1}^n p_j^{s_j}$, then we can write
$$\prod_{j\in J_S} (p_j-1)p_j^{t_j-s_j} = \phi\left(\frac{r}{d}\right).$$
This illustrates why it was important to consider the set $J_S$ for if the left hand product was over all the $j$, we would not get this nice equality.

Let $\vec{\textbf{1}}\in\R$ be the element that has $1$ in the $T_j+1$ position $j=0,\dots,n-1$ and $0$ everywhere else. Then set $\R' = \R \setminus\vec{\textbf{1}}$. Define

$$\hat{\F}_{\vec{d}(\vec{\alpha})} = \{(f_{\vec{\textbf{1}}},(f_{\vec{\alpha}})) \in \hat{\F}_{d_{\vec{\textbf{1}}}} \times \prod_{\vec{\alpha}\in\R'}\F_{\vec{\alpha}} : (f_{\vec{\alpha}},f_{\vec{\beta}})=1 \mbox{ for all } \vec{\alpha}\not=\vec{\beta}\}$$

With this definition, define $\F^{\vec{\beta}}_{\vec{d}(\vec{\alpha})},\F_{[\vec{d}(\vec{\alpha})]}$ and $\hat{\F}_{[\vec{d}(\vec{\alpha})]}$ the same way as in Section \ref{primepower}.
%

Now, if $x_{q+1}$ is the point at infinity, then

$$F_{(p_j^{k_j})}(x_{q+1}) = \begin{cases} \mbox{leading coefficient of } f_{\vec{\textbf{1}}} & (f_{\vec{\alpha}})\in\hat{\F}^{\vec{\beta}}_ {d(\vec{\alpha})}\\ 0 & \mbox{otherwise}  \end{cases}$$
where $\beta = (\beta_{1},\dots,\beta_{T_n})\in\R$ is any tuple such that $\beta_{T_j+1},\dots,\beta_{T_j+k_j}=0$.

Similarly to Corollaries \ref{primepowercor2} and \ref{primepowercor3}, we get

\begin{cor}\label{gencor2}
Let $\epsilon_{i,j,k_j}\in\mu_{p_j^{k_j}}\cup\{0\}$ such that for $m_S$ of the $i$ we have
$$\epsilon_{i,j,1}, \dots, \epsilon_{i,j,s_j}\not=0 \mbox{ and } \epsilon_{i,j,s_j+1},\dots,\epsilon_{i,j,t_j} =0,\quad 1\leq j \leq n$$
for all $S\in\s$. Further if $\epsilon_{i,j,k_j}\not=0$ then $\epsilon_{i,j,k_j}=(\epsilon_{i,j,k_j-1})^p$.
\begin{align*}
&\frac{|\{(f_{\vec{\alpha}})\in\hat{F}_{[\vec{d}(\vec{\alpha})]} : \chi_{p_j^{k_j}}(F_{(p_j^{k_j})}(x_i)) = \epsilon_{i,j,k_j}, 1\leq j \leq n, 1\leq k_j \leq t_j, 1\leq i \leq q+1\}|}{|\hat{F}_{[\vec{d}(\vec{\alpha})]}|}\\
= & \prod_{S\in\s}  \left( \frac{\prod_{j\in J_{\s}}(p_j-1)p_j^{t_j-s_j}}{\prod_{j=1}^n p_j^{s_j} (q+r-1)}\right)^{m_S} \left(\frac{q}{r(q+r-1)}\right)^{q+1-\sum_{S\in\s}m_S}\left(1 + O\left(q^{-\frac{\min_{\vec{\alpha}\in\R}d(\vec{\alpha})} {2}}\right)\right).
\end{align*}

\end{cor}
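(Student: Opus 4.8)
The plan is to mirror the two–stage argument used in the prime–power case (Corollaries \ref{primepowercor2} and \ref{primepowercor3}), taking Corollary \ref{generalcor} as the starting point. The first stage converts the \emph{field-value} count of Corollary \ref{generalcor} into a \emph{character-value} count over $\F_{\vec{d}(\vec{\alpha})}$ at the $\ell$ finite points, and the second stage incorporates the point at infinity $x_{q+1}$ and passes to the ratio over $\hat{\F}_{[\vec{d}(\vec{\alpha})]}$. Since Proposition \ref{generalprop} and Corollary \ref{generalcor} already factor everything through the Chinese Remainder isomorphisms $\phi$ and $\psi$, every quantity below will be a product of the corresponding prime-power quantities, and the compatibility relations $\epsilon_{i,j,k_j-1}=\epsilon_{i,j,k_j}^{p}$ tie consecutive levels of each prime together.

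For the first stage, fix a finite point $x_i$ of type $S=(s_1,\dots,s_n)$, so that $F_{(p_j^{k_j})}(x_i)\neq 0$ exactly when $k_j\leq s_j$. For each prime $p_j$ and each level $k_j\leq s_j$ the prescribed value $\epsilon_{i,j,k_j}=\chi_{p_j^{k_j}}(F_{(p_j^{k_j})}(x_i))$ has exactly $(q-1)/p_j^{k_j}$ preimages in $\Ff_q^*$ under $\chi_{p_j^{k_j}}$, and across distinct primes these counts multiply by the Chinese Remainder Theorem. Hence each type-$S$ point contributes a conversion factor $\prod_{j=1}^n\prod_{k_j=1}^{s_j}(q-1)/p_j^{k_j}$, exactly as the factor $\prod_k((q-1)/p^k)^{\ell-\sum_{j<k}m_j}$ arose in the proof of Corollary \ref{primepowercor2}. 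Multiplying the per-point factors of Corollary \ref{generalcor} by these conversion factors, the powers of $(q-1)$ must cancel and the powers of each $p_j$ must collapse, turning the type-$S$ factor into the intended character factor and the full factor into $q/(r(q+r-1))$.

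For the second stage I would set the finite count to $\ell=q$ and analyze $x_{q+1}$ through the displayed evaluation of $F_{(p_j^{k_j})}(x_{q+1})$. If $x_{q+1}$ is of full type then $(f_{\vec{\alpha}})\in\hat{\F}_{\vec{d}(\vec{\alpha})}$ and the leading coefficient $\alpha$ of $f_{\vec{\textbf{1}}}$ is constrained by $\chi_{p_j^{t_j}}(\alpha)=\epsilon_{q+1,j,t_j}$ for every $j$, giving $(q-1)/r$ admissible $\alpha$ by the Chinese Remainder Theorem. If instead $x_{q+1}$ is of partial type $S\in\s$, then $(f_{\vec{\alpha}})\in\hat{\F}^{\vec{\beta}}_{\vec{d}(\vec{\alpha})}$ for a $\vec{\beta}$ whose $j$-th block vanishes in its first $s_j$ entries and (for $j\in J_S$) is nonzero in the next entry, yielding $\prod_{j\in J_S}(p_j-1)p_j^{t_j-s_j-1}$ choices of $\vec{\beta}$; here the leading coefficient is constrained only through $\chi_{p_j^{s_j}}(\alpha)=\epsilon_{q+1,j,s_j}$, contributing $(q-1)/\prod_j p_j^{s_j}$ choices, while one type-$S$ point is removed from the finite count and $\deg f_{\vec{\beta}}$ drops by one. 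As in Corollary \ref{primepowercor3}, the goal is to show both cases collapse to the same expression of the shape $\tfrac{(q-1)(q+r-1)}{q}\,\tfrac{L_{r-2}q^{\sum d(\vec{\alpha})}}{\zeta_q(2)^{r-1}}\,\prod_{S}(\cdots)^{m_S}(\cdots)^{q+1-\sum m_S}$, which upon dividing by $|\hat{\F}_{[\vec{d}(\vec{\alpha})]}|=\tfrac{(q-1)(q+r-1)}{q}\,\tfrac{L_{r-2}q^{\sum d(\vec{\alpha})}}{\zeta_q(2)^{r-1}}(1+O(\cdots))$ produces the claimed ratio.

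The hard part is precisely this last coincidence. The powers of each prime $p_j$ entering from the conversion fibers, from the leading-coefficient fibers $(q-1)/\prod_j p_j^{s_j}$ and $(q-1)/r$, from the $\vec{\beta}$-count $\prod_{j\in J_S}(p_j-1)p_j^{t_j-s_j-1}$, and from the single degree drop in $\hat{\F}^{\vec{\beta}}$ must telescope simultaneously across \emph{all} levels $1\leq k_j\leq t_j$ and across the independent primes, so that the infinity point drops out. In the single-prime reduction one checks directly that these powers sum to zero; the delicate accounting is keeping the Chinese Remainder decomposition aligned when some prime blocks are full at $x_{q+1}$ while others are partial, since it is exactly the mixed full/partial blocks that force one to track the interaction between $f_{\vec{\textbf{1}}}$ (carrying the non-monic leading coefficient) and $\vec{\beta}$ (carrying the degree drop) with care. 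Once that bookkeeping is carried out, the behaviour at $x_{q+1}$ cancels and the finite-point character count alone determines the ratio, exactly as in the prime-power case.
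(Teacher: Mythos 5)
Your outline is precisely the argument the paper intends: the paper gives no proof of this corollary at all, saying only ``Similarly to Corollaries \ref{primepowercor2} and \ref{primepowercor3}, we get,'' and your two stages --- converting the value count of Corollary \ref{generalcor} into a character count at the finite points via the fibers $(q-1)/p_j^{k_j}$ (multiplied across primes by CRT), then a case analysis at $x_{q+1}$ distinguishing $\hat{\F}_{\vec{d}(\vec{\alpha})}$ from $\hat{\F}^{\vec{\beta}}_{\vec{d}(\vec{\alpha})}$, with $|\hat{\F}_{[\vec{d}(\vec{\alpha})]}| = \frac{(q-1)(q+r-1)}{q}\frac{L_{r-2}q^{\sum_{\vec{\alpha}\in\R} d(\vec{\alpha})}}{\zeta_q(2)^{r-1}}\left(1+O\left(q^{-\min d(\vec{\alpha})/2}\right)\right)$ --- are exactly the intended generalizations of the prime-power proofs. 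Your individual counts are right: $(q-1)/r$ leading coefficients in the full case, $(q-1)/\prod_j p_j^{s_j}$ in the partial case, $\prod_{j\in J_S}(p_j-1)p_j^{t_j-s_j-1}$ choices of $\vec{\beta}$, one type-$S$ point removed and one degree dropped.

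There is, however, one concrete problem with the step you defer (``both cases collapse to the same expression''): it \emph{fails} with the constants you import from Corollary \ref{generalcor} as printed. Running your Case 2 bookkeeping, the collapse requires the normalized per-point type-$S$ factor to equal $\#\{\vec{\beta}\}/\bigl(d(q+r-1)\bigr)$ with $d=\prod_j p_j^{s_j}$ and $\#\{\vec{\beta}\}=\prod_{j\in J_S}(p_j-1)p_j^{t_j-s_j-1}=\phi(r/d)$, whereas the factor your stage 1 produces from the printed Corollary \ref{generalcor} has numerator $\prod_{j\in J_S}(p_j-1)p_j^{t_j-s_j}$; so Case 2 comes out $\prod_{j\in J_S}p_j^{-1}$ times Case 1, and the infinity point does not drop out. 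The resolution is that the exponent $t_j-s_j$ in Corollary \ref{generalcor} (and in the corollary you are proving) is an off-by-one misprint for $t_j-s_j-1$: the consistent numerator is $\phi(r/d)$, as confirmed by specializing to $r=p^n$ (where $\frac{(p-1)p^{2n-2k-1}}{q}\cdot\frac{q}{p^n(q+p^n-1)}=\frac{\phi(p^{n-k})}{p^k(q+p^n-1)}$, matching Corollary \ref{primepowercor3}), by the paper's own remark after Corollary \ref{generalcor} identifying the numerator with $\phi(r/d)$, by the heuristic of Section \ref{heuristic}, and by the probabilities $\frac{\phi(r/d)}{d(q+r-1)}$ in Theorem \ref{mainthm}. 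So your stage-2 ingredients are the correct ones; carrying out the computation verifies the $\phi(r/d)$ form of the corollary (the one that actually feeds Theorem \ref{mainthm}), and your stage-1 conversion should be calibrated against the prime-power count of Corollary \ref{primepowercor1} rather than against the printed constant of Corollary \ref{generalcor}, which inherits the same typo.
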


Applying this Corollary we obtain

\begin{proof}[Proof of Theorem \ref{mainthm}]

$$\frac{|\{(f_{\vec{\alpha}})\in\hat{F}_{[\vec{d}(\vec{\alpha})]} : S_d(F_{(d)}) = M_d, \forall d|r \}|}{|\hat{F}_{[\vec{d}(\vec{\alpha})]}|}$$
$$ = \sum_{\substack{E_{d,1},\dots,E_{d,q+1} \in \mu_d \cup\{0\} \\ \sum_{i=1}^{q+1}E_{d,i}=M_d \\ E_{d,i} = \prod_{p|d}(E_{p^{v_p(d)},i})^{\sigma_p} \forall d|r  \\ E_{p_j^{k_j},i}=0 \implies E_{p_j^{s_j},i}=0 \mbox{ and} \\ E_{p_j^{s_j},i}\not=0 \implies E_{p_j^{k_j},i}=E_{p_j^{s_j},i}^{p_j^{s_j-k_j}} \forall j, k_j\leq s_j\leq t_j }} \frac{|\{(f_{\vec{\alpha}})\in\hat{F}_{[\vec{d}(\vec{\alpha})]} : \chi_d(F_{(d)}(x_i)) =E_{d,i}, \forall d|r, 1\leq i \leq q+1 \}|}{|\hat{F}_{[\vec{d}(\vec{\alpha})]}|}$$
\begin{align*}
= \sum_{\substack{E_{d,1},\dots,E_{d,q+1} \in \mu_d \cup\{0\} \\ \sum_{i=1}^{q+1}E_{d,i}=M_d \\ E_{d,i} = \prod_{p|d}(E_{p^{v_p(d)},i})^{\sigma_p} \forall d|r  \\ E_{p_j^{k_j},i}=0 \implies E_{p_j^{s_j},i}=0 \mbox{ and}\\ E_{p_j^{s_j},i}\not=0 \implies E_{p_j^{k_j},i}=E_{p_j^{s_j},i}^{p_j^{s_j-k_j}} \forall j, k_j\leq s_j\leq t_j }} & \prod_{S\in\s}  \left( \frac{\prod_{j\in J_{\s}}(p_j-1)p_j^{t_j-s_j}}{\prod_{j=1}^n p_j^{s_j} (q+r-1)}\right)^{m_S}  \\
& \times \left(\frac{q}{r(q+r-1)}\right)^{q+1-\sum_{S\in\s}m_S}\left(1 + O\left(q^{-\frac{\min_{\vec{\alpha}\in\R}d(\vec{\alpha})}{2}}\right)\right)
\end{align*}
$$ = \Prob(\sum_{i=1}^{q+1} X_{d,i} = M_d \mbox{ for all } d|r)\left(1 + O\left(q^{-\frac{\min_{\vec{\alpha}\in\R}d(\vec{\alpha})}{2}}\right)\right)$$

Where in the subscript we have $\sigma_p = p^{-v_p(d)} \mod{\frac{d}{p^{v_p(d)}}}$.

Further, if we let $d=\prod_{j=1}^n p_j^{s_j}$ then factor being raised to the $m_S$ can be rewritten as
$$\frac{\phi\left(\frac{r}{d}\right)}{d(q+r-1)}$$
and, if $d\not=r$, the $X_{p^s,i}$ satisfy the relationship
$$\Prob\left(X_{p^s,i} = \epsilon_{p^s,i}\not=0,1 \leq s \leq v_p(d) \mbox{ and } X_{p^s,i} =0, v_p(d)<s\leq v_p(r) \mbox{ for all } p|r\right)$$
$$ = \begin{cases}\frac{\phi(\frac{r}{d})}{d(q+r-1)} & \mbox{if }  \epsilon_{p^{s-1},i} = \epsilon_{p^s,i}^p \mbox{ for all } p|r, 1\leq s \leq v_p(d)  \\ 0  & \mbox{otherwise} \end{cases}.$$
If $d=r$, then
$$\Prob\left(X_{p^s,i} = \epsilon_{p^s,i}, s \leq v_p(r), \mbox{ for all } p|r\right) = \begin{cases} \frac{q }{ r(q+r-1)} & \mbox{if }  \epsilon_{p^{s-1},i} = \epsilon_{p^s,i}^p, 1\leq  s \leq v_p(r), \mbox{ for all } p|r \\ 0 & \mbox{otherwise} \end{cases}.$$

Notice also, since the $\sigma_p$ come from the Chinese Remainder Theorem, if $X_{d,i}\not=0$ then the value of $X_{p^{v_p(d)},i}$ is uniquely determined and hence so are $X_{p^k,i}$ for all $p|r$ and $k \leq v_p(d)$. Moreover, for $k> v_p(d)$, $X_{p^k,i}$ will either be $0$ or determined, up to a $p^{k-v_p(d)}$th root of unity. From here we get

\begin{align*}
&\Prob\left(X_{d,i}=\epsilon_{d,i} \not=0\right)\\
&= \sum_{\substack{d_1 \\ d|d_1}} \sum {}^{'} \Prob\left(X_{p^s,i}=\epsilon_{p^{v_p(d_1)},i}^{p^{v_p(d_1)-s}},1\leq s \leq v_p(d_1) \mbox{ and } X_{p^s,i} =0, v_p(d_1)<s\leq v_p(r) \mbox{ for all } p|r \right)
\end{align*}

where $\sum {}^{'}$ is the sum that runs over all 
$$\epsilon_{p^{v_p(d_1)},i} \in \mu_{p^{v_p(d_1)}} \mbox{ such that } \epsilon_{p^{v_p(d_1)},i}^{p^{v_p(d_1-v_p(d)}} = \epsilon_{p^{v_p(d)},i} \mbox{ for all } p|r.$$

Hence,
\begin{align*}
& = \sum_{\substack{d_1\not=r \\ d|d_1}} \sum{}^{'}  \frac{\phi\left(\frac{r}{d_1}\right)}{d_1(q+r-1)}  +   \sum_{d_1=r}\sum{}^{'}  \frac{q}{r(q+r-1)}\\
& = \frac{q+\sum_{\substack{d_1\not=r \\ d|d_1 }}\phi\left(\frac{r}{d_1}\right)  }{d(q+r-1)} = \frac{q + \sum_{n|\frac{r}{d}}\phi\left(\frac{r/d}{n}\right)-1}{d(q+r-1)} = \frac{q+\frac{r}{d}-1}{d(q+r-1)}.
\end{align*}


Therefore
$$\Prob(X_{d,i}=0) = 1 - \sum_{\epsilon_{d,i}\in\mu_d} \Prob(X_{d,i}=\epsilon_{d,i}) = 1- \frac{q+\frac{r}{d}-1}{q+r-1} = \frac{r-\frac{r}{d}}{q+r-1}$$

\end{proof}

\section{Heuristic}\label{heuristic}

In this section we will discuss a heuristic for Corollary \ref{generalcor} and consequently Theorem \ref{mainthm}. First we will need Lemma 8.1 from \cite{BDFL2}.

\begin{lem}

Let $S_n$ be the set of $n$-tuples $(F_1,\dots,F_n)$ of nonzero residues of modulo $(X-t)^2$ such that $(X-t)$ divides at most one of the $F_i$. Then
$$|S_n| = q^{n-1}(q-1)^n(q+n).$$

\end{lem}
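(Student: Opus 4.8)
The plan is to reduce everything to counting inside the local ring $R = \Ff_q[X]/(X-t)^2$, whose elements I will write uniquely as $a + b(X-t)$ with $a,b\in\Ff_q$. First I would record the two facts that drive the whole argument: a residue $F = a + b(X-t)$ is nonzero exactly when $(a,b)\neq(0,0)$, and $(X-t)\mid F$ in $R$ precisely when $a=0$ (so that the nonzero multiples of $(X-t)$ are the nonzero nilpotents, while the units of $R$ are exactly the residues with $a\neq 0$). Using this, I would split the nonzero residues into two disjoint classes: the $q(q-1)$ units, where $a$ has $q-1$ nonzero choices and $b$ has $q$ free choices, and the $q-1$ nonzero multiples of $(X-t)$, where $a=0$ and $b\neq 0$.

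With this dichotomy in hand, counting $S_n$ becomes elementary casework on how many coordinates $F_i$ are divisible by $(X-t)$, which by hypothesis is either $0$ or $1$. In the first case every $F_i$ must be a unit, contributing $\bigl(q(q-1)\bigr)^n = q^n(q-1)^n$ tuples. In the second case I choose the unique index $i$ with $(X-t)\mid F_i$ in $n$ ways, assign that coordinate one of the $q-1$ nonzero multiples of $(X-t)$, and let the remaining $n-1$ coordinates range over units, contributing $n(q-1)\bigl(q(q-1)\bigr)^{n-1} = n\,q^{n-1}(q-1)^n$ tuples.

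Summing the two cases then yields
$$|S_n| = q^n(q-1)^n + n\,q^{n-1}(q-1)^n = q^{n-1}(q-1)^n(q+n),$$
which is the claimed identity.

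I do not expect any genuine obstacle here; the only point that needs care is the bookkeeping for what ``nonzero residue'' and ``$(X-t)$ divides'' mean in $R$ — in particular, that a nonzero residue divisible by $(X-t)$ admits $q-1$ rather than $q$ choices, because its constant term must vanish while its linear term must be nonzero. Before assembling the final product I would sanity-check that the two classes are disjoint and exhaust all $q^2-1$ nonzero residues, since $q(q-1)+(q-1)=q^2-1$, which confirms the case split is both complete and non-overlapping.
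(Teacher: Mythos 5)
Your proof is correct, and in fact it supplies something the paper does not contain: here the lemma is simply imported as Lemma 8.1 from \cite{BDFL2} and stated without proof, so there is no in-paper argument to compare against. Your casework is the natural (and essentially canonical) one: identifying the nonzero residues of $\Ff_q[X]/(X-t)^2$ as the $q(q-1)$ units together with the $q-1$ nonzero nilpotents, then splitting $S_n$ according to whether zero or exactly one coordinate is divisible by $(X-t)$, giving $q^n(q-1)^n + n\,q^{n-1}(q-1)^n = q^{n-1}(q-1)^n(q+n)$. All the delicate points are handled: you correctly note that a nonzero residue divisible by $(X-t)$ has only $q-1$ choices (vanishing constant term, nonzero linear term), and your sanity check $q(q-1)+(q-1)=q^2-1$ confirms the dichotomy is exhaustive and disjoint. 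Nothing is missing; this is a complete, self-contained verification of the quoted result.
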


The set $S$ models the set of $n$ squarefree coprime polynomials. Write $F=f_1f_2^2\dots f_{r-1}^{r-1}$ with $(f_1,\dots,f_{r-1})\in S_{r-1}$. If $r=\prod_{j=1}^n p_j^{t_j}$ then Corollary \ref{generalcor} deals with $F_{(p_j^{k_j})}$ for $1\leq j \leq n$, $1\leq k_j \leq t_j$. That is, we want to determine how many $(r-1)$-tuples there are in $S_{r-1}$ that satisfy
$$F_{(p_j^{k_j})} \equiv a_{j,k_j} \mod{(X-t)^2}$$
where $a_{j,k_j} = a_{j,k_j-1}(b_{j,k_j})^{p_j^{k_j-1}}$ for some $b_{j,k_j}$.

Suppose $a_{j,t_j}\not\equiv0 \mod{(X-t)}$ for all $j$. Using the notation $F_{(p_j^{k_j})} = \prod_{i=1}^{r-1} f_i^{i \mod{p_j^{k_j}}}$ we will start with $j=1$. The condition of
$$F_{(p_1)} \equiv a_{1,1} \mod{(X-t)^2}$$
says that there are $(q(q-1))^{r-\frac{r}{p_1}-1}$ choices for $f_i$ with $i\not=1$ and $q$ choices for $f_1$ which gives a total of $q^{r-\frac{r}{p_1}}(q-1)^{r-\frac{r}{p_1}-1}$ choices of the $f_i$. Now if we fix one of these choices and look at
$$F_{(p_1^2)} \equiv a_{1,2} = a_{1,1}(b_{1,2})^p \mod{(X-t)^2}$$
then there would be $(q(q-1))^{\frac{r}{p_1}-\frac{r}{p_1^2}-1}$ choices for the $f_{p_1i}$ with $i\not=1$ and $p_1q$ choices for $f_{p_1}$ which gives a total of $p_1q^{r-\frac{r}{p_1^2}}(q-1)^{r-\frac{r}{p_1^2}-2}$. From here we can see that if we consider all the conditions
$$F_{(p_1^{k_1})} \equiv a_{1,k_1} \mod{(X-t)^2} \quad \quad \quad 1\leq k_1 \leq t_1$$
we will get $p_1^{\frac{t_1(t_1-1)}{2}}q^{r-\frac{r}{p_1^{t_1}}}(q-1)^{r-\frac{r}{p_1^{t_1}}-t_1}$. Now, if we fix these choices and look at
$$F_{(p_2)} \equiv a_{2,1} \mod{(X-t)^2}$$
then all the $f_i$ appearing in $F_{(p_2)}$ such that $i \not \equiv 0 \mod{p_1^{t_1}}$ are already determined. If we let $j = p_1^{-t_1} \mod{\frac{r}{p_1^{t_1}}}$ then there are $(q(q-1))^{\frac{r}{p_1^{t-1}}-\frac{r}{p_1^{t_1}p_2}-1}$ choices for the $f_{p_1^{t_1}i}$ such that $i\not=j$, $i\not\equiv 0 \mod{p_2}$ and $q$ choices for $f_{p_1^{t_1}j}$. Therefore we get a total of $p_1^{\frac{t_1(t_1-1)}{2}}q^{r-\frac{r}{p_1^{t_1}p_2}}(q-1)^{r-\frac{r}{p_1^{t_1}p_2}-t_1-1}$. From here it is clear what will happen and if we look at all the conditions
$$F_{(p_j^{k_j})} \equiv a_{j,k_j} \mod{(X-t)^2} \quad \quad \quad 1 \leq k_j \leq t_j, 1 \leq j \leq n$$
then we get
$$(q(q-1))^{r-1}\prod_{j=1}^n \frac{p_j^{\frac{t_j(t_j-1)}{2}}}{(q-1)^{t_j}}.$$
Dividing by $|S|$, we get
$$\frac{q}{q+r-1} \prod_{j=1}^n \frac{p^{\frac{t_j(t_j-1)}{2}}}{(q-1)^{t_j}}$$
which is consistent with Proposition \ref{generalprop}.

Now suppose that not all $a_{i,j,k_j}$ are non-zero. As with the statement of Corollary \ref{generalcor} we have $s_1,\dots,s_n$ such that $0\leq s_j \leq t_j$ (but not all $s_j=t_{j}$) and $a_{j,k_j} \equiv 0 \mod{(X-t)}$ if and only if $k_j > s_j$, $j=1,\dots,n$. Let $d=\prod_{j=1}^n p_j^{s_j}$ then, as above, the conditions
$$F_{(p_j^{k_j})} \equiv a_{j,k_j} \mod{(X-t)^2} \quad \quad \quad 1\leq k_j \leq s_j, j=1,\dots,n$$
yields
$$(q(q-1))^{r-d} \prod_{j=1}^n \frac{p_j^{\frac{s_j(s_j-1)}{2}}}{(q-1)^{s_j}} $$
choices of the $f_i$. Fixing one of these choices, consider the conditions
$$F_{(p_j^{s_j+1})} \equiv a_{j,s_j+1} \mod{(X-t)^2} \quad \quad \quad j = 1, \dots, n \mbox{ such that } s_j \not =t_j.$$
The value of $f_i$ is already determined if $i \not\equiv 0 \mod{d}$. Further there is exactly one $1\leq i \leq r-1$ such that $v_{p_j}(i) = s_j$ for all $1\leq j\leq n$ such that $f_i \equiv 0 \mod{(X-t)}$. Then $f_i$ has $(q-1)$ different choices and the rest of the $f_j$ have $(q(q-1))^{\frac{r}{d}-\frac{r}{d'}-1}$ different choices where
$$d' = \prod_{\substack{j=1 \\ s_j\not=t_j}}^n p_j^{s_j+1} \prod_{\substack{j=1 \\ s_j=t_j}}^n p_j^{t_j}.$$
Moreover, there are $\phi(\frac{r}{d})=\prod_{j=1, s_j \not = t_j}^n (p_j-1)p_j^{t_j-s_j-1}$ such $i$ that satisfy $v_{p_j}(i)=s_j$ for all $1\leq j \leq n$. Hence with the new conditions there are
$$q^{r-\frac{r}{d'}-1}(q-1)^{r-\frac{r}{d'}}\phi\left(\frac{r}{d}\right)\prod_{j=1}^n \frac{p_j^{\frac{s_j(s_j-1)}{2}}}{(q-1)^{s_j}}$$
choices of the $f_i$.

For the remaining $\frac{r}{d'}-1$ of the $f_i$ not accounted for, the conditions
$$F_{(p_j^{k_j})} \equiv a_{j,k_j} \mod{(X-t)^2} \quad \quad \quad k_j> s_j+1, j = 1, \dots, n \mbox{ such that } s_j\not=t_j$$
are already satisfied as $a_{j,k_j}\equiv 0 \mod{(X-t)}$. Therefore, we need only $f_i\not\equiv0\mod{(X-t)}$ giving $(q(q-1))^{\frac{r}{d'}-1}$ choices. Therefore our final count will be
$$q^{r-2}(q-1)^{r-1}\phi\left(\frac{r}{d}\right)\prod_{j=1}^n \frac{p_j^{\frac{s_j(s_j-1)}{2}}}{(q-1)^{s_j}}.$$

Dividing by $|S|$ gives
$$\frac{\phi\left(\frac{r}{d}\right)}{q+r-1}\prod_{j=1}^n \frac{p_j^{\frac{s_j(s_j-1)}{2}}}{(q-1)^{s_j}}$$
which is consistent with Corollary \ref{generalcor}. 


\textbf{Acknowledgements:} I would like to thank Chantal David for the many discussions we had about this topic and for taking time to make sure the final paper was presentable. I would also like to thank Elisa Lorenzo, Giulio Meleleo and Piermarco Milione for their helpful discussions about their paper.

\bibliography{Distribution}
\bibliographystyle{amsplain}

\end{document}